\newcommand{\Z}{\mathbb Z}
\newcommand{\f}{(f|_v)}
\newcommand{\gz}{(g|_0)}
\newcommand{\gu}{(g|_1)}
\newcommand{\ggz}{g|_0}
\newcommand{\ggu}{g|_1}
\newcommand{\ZZ}{\Z/(2^n\Z)}
\newcommand{\Zd}{\Z/(d^n\Z)}
\newcommand{\Aut}{\mathop{\rm Aut}\nolimits}
\newtheorem{theorem}{Theorem}[section]
\newtheorem{proposition}[theorem]{Proposition}
\newtheorem{lemma}[theorem]{Lemma}
\theoremstyle{definition}
\newtheorem{definition}{Definition}
\newtheorem{example}{Example}
\newtheorem{remark}[theorem]{Remark}
\newtheorem*{theorem_permutational}{Theorem~\ref{the:riv}}
\newtheorem*{theorem_sections}{Theorem~\ref{the:sec}}
\newtheorem*{theorem_main}{Theorem~\ref{the:main}}
\newtheorem*{theorem_larin}{Theorem~\ref{the:larin}}
\title{Endomorphisms of regular rooted trees induced by the action of polynomials on the ring $\Z_d$ of $d$-adic integers}
\author{Elsayed Ahmed}
\address{Department of Mathematics and Statistics\\
University of South Florida\\
4202 E Fowler Ave\\
Tampa, FL 33620-5700\\
\href{mailto:eahmed1@mail.usf.edu}{eahmed1@mail.usf.edu}}
\author{Dmytro Savchuk}
\address{Department of Mathematics and Statistics\\
University of South Florida\\
4202 E Fowler Ave\\
Tampa, FL 33620-5700\\
\href{mailto:savchuk@usf.edu}{savchuk@usf.edu}}
\begin{document}
\maketitle

\begin{abstract}
We show that every polynomial in $\Z[x]$ defines an endomorphism of the $d$-ary rooted tree induced by its action on the ring $\Z_d$ of $d$-adic integers. The sections of this endomorphism also turn out to be induced by polynomials in $\Z[x]$ of the same degree. In the case of permutational polynomials acting on $\Z_d$ by bijections the induced endomorphisms are automorphisms of the tree. In the case of $\Z_2$ such polynomials were completely characterized by Rivest in~\cite{riv:perm}. As our main application we utilize the result of Rivest to derive the condition on the coefficients of a permutational polynomial $f(x)\in\Z[x]$ that is necessary and sufficient for $f$ to induce a level transitive automorphism of the binary tree, which is equivalent to the ergodicity of the action of $f(x)$ on $\Z_2$ with respect to the normalized Haar measure.
\end{abstract}

\section{Introduction}

For a fixed integer $d\geq 2$ every polynomial $f(x) \in \Z[x]$ naturally induces mappings $f_n \colon \Zd\rightarrow\Zd$ for all positive integers $n$. Equivalently, these mappings are induced by the action of $f$ on the ring of $d$-adic integers $\Z_d$. These two equivalent approaches to study polynomials in $\Z[x]$ have been used in different contexts during the last several decades. One of the first motivations came from the constructions of the generators for pseudo-random sequences and goes back to Knuth~\cite{knuth:art2,anashin:uniformly_distributed_seqs98,larin:tr}. In this and most of the other applications it is crucial to consider polynomials acting by permutations on $\Zd$. Such polynomials are generally called permutational polynomials, however it is important to emphasize the distinction of these polynomials from the class of permutation polynomials that permute elements of finite fields $\mathbb F_{p^n}$ (see~\cite[Chapter 7]{lidl_n:finite_fields83} for a survey). In many cases the stronger condition of transitivity of this action is required. Another type of applications come from cryptography. Rivest in~\cite{riv:perm} completely characterized polynomials that act by permutations on $\ZZ$ for all $n\geq 1$ and points out the use of one of them (namely, $f(x)=2x^2+x$) in the symmetric block cipher RC6~\cite{rivest:RC6} that was one of the five finalists of the AES competition. The questions of ergodicity of the action of permutational polynomials have been studied in the context of dynamical systems on $\Z_p$ by Anashin~\cite{ana:erg}. We refer the reader to a nice survey paper~\cite{fan:dynamics_survey} for a background and history in polynomial dynamics on $\Zd$.

In this paper we offer another view to the polynomials acting on $\Z_d$ and on $\Z/(d^n\Z)$. Namely, we use the tools from the theory of groups acting on rooted trees by automorphisms and groups generated by Mealy automata. This theory exploded in 1980's after many counterexamples to well known conjectures in group theory were found among such groups. For example, Grigorchuk group was the first example of a group  of intermediate growth~\cite{grigorch:milnor}, as well as one of the first examples of infinite finitely generated torsion groups~\cite{grigorch:burnside} (see also~\cite{aleshin:burnside,sushch:burnside,gupta_s:burnside}). Now this is a rich theory with connections to combinatorics~\cite{grigorchuk-s:hanoi-cr}, analysis~\cite{grigorch_lsz:atiyah}, holomorphic dynamics~\cite{nekrash:self-similar}, dynamical systems~\cite{grigorch_s:ergodic_decomposition}, computer science~\cite{miasnikov_s:automatic_graph}, and many other areas. We refer the reader to the survey article~\cite{gns00:automata} for history and references. The key idea in many of the arguments in this theory is understanding automorphisms of rooted trees by describing their \emph{sections} (terms \emph{states} and \emph{restrictions} are also widely used) at the subtrees hanging down from vertices of the original rooted tree. If the original tree was regular (i.e., every vertex has the same number of children), these subtrees are canonically isomorphic to the original tree, and these sections can be treated as the automorphisms of the original tree as well. We utilize this approach to analyze the action of polynomials on $\Z_d$.

Note that the connection between the functions on the boundary of $X^*$ induced by automata and 1-Lipschitz functions on $\Z_d$ was also established by Anashin in~\cite{anashin:automata12}, where a criterion for finiteness of the corresponding automaton in terms of the van der Put series of the function was developed. This criterion provided an application of the $p$-adic analysis to the theory of automata. In this paper we suggest a converse application.

The set of vertices of a rooted $d$-ary tree can be identified with the set $X^*$ of all finite words over an alphabet $X=\{0,1,\ldots,d-1\}$. In this case the $n$-th level of $X^*$ corresponds to $\Zd$ by identifying $x_0x_1 \ldots x_{n-1} \in X^n$ with $x_0+d\cdot x_1+ \cdots +d^{n-1}\cdot x_{n-1} \in \Zd$. The boundary of $X^*$ (consisting of all infinite paths without backtracking initiating at the root) is then identified with the ring $\Z_d$ of $d$-adic integers. We show in Proposition~\ref{pro:distinct} that in this interpretation each polynomial in $\Z[x]$ induces an endomorphism of the tree $X^*$, while each permutational polynomial induces an automorphism. Our first result describes the structure of these endomorphisms.

\begin{theorem_sections}
Given a polynomial $f(x)=a_0+a_1x+ \cdots +a_tx^t \in \Z[x]$ inducing an endomorphism of $X^*$, the image of a vertex $x_0 \in X$ under $f$ is $f(x_0)\pmod d$ and the section of $f$ at $x_0$ is again a polynomial given by the equation:
\begin{equation}
\label{equ:secf}
f|_{x_0}(x)=\bigl(f(x_0)\ \mathrm{div}\  d\bigr)+\sum_{i=1}^{t}\frac{f^{(i)}(x_0)}{i!}d^{i-1}x^i,
\end{equation}
where $f^{(i)}$ denotes the $i^{th}$ derivative of a polynomial $f$, and $f(x_0)\ \mathrm{div}\  d$ is the quotient of $f(x_0)$ and $d$.
\end{theorem_sections}

Note that the case of linear polynomials was partially considered by Bartholdi and \v Sun\'ic in~\cite{bartholdi_s:bsolitar}.

Our main application deals with permutational polynomials acting transitively on $\ZZ$ for all $n$. In terms of the action on the tree this condition is equivalent to being level transitive. Equivalently, $f\in\Z[x]$ induces a level transitive automorphism if and only if the corresponding dynamical system $(\Z_2,f)$ is minimal (i.e., the orbit of each element of $\Z_2$ under $f$ is dense in $\Z_2$), or ergodic with respect to the Haar measure on $\Z_2$ (coinciding with the uniform Bernoulli measure on $\Z_2$ viewed as a Cantor set)~\cite[Proposition~6.5]{gns00:automata},\cite{ana:erg}.

In order to state our main result, we first review the history of the problem. The following theorem prover by Larin in~\cite{larin:tr} gives the conditions that $f$ has to  satisfy in order to be transitive mod $2^n$ for each positive integer $n$.

\begin{theorem_larin}[\cite{larin:tr}]
\label{the:larin}
A polynomial $f(x)=a_0+a_1x+ \cdots +a_tx^t \in \Z[x]$ is transitive $\mathrm{mod}\ 2^n$ for every positive integer $n$ if and only if it satisfies the following conditions:
\begin{itemize}
\item[(i)] $a_0 \equiv 1 \pmod2$
\item[(ii)] $a_1 \equiv 1 \pmod2$
\item[(iii)] $a_3+a_5+a_7+ \cdots \equiv 2a_2 \pmod4$
\item[(iv)] $a_4+a_6+a_8+ \cdots \equiv a_1+a_2-1 \pmod4$
\end{itemize}
\end{theorem_larin}


Rivest in~\cite{riv:perm} (see an alternative proof in~\cite{sun:perm}) derived the following conditions that are necessary and sufficient for a polynomial $f\in\Z[x]$ to induce a permutation of each level of $\{0,1\}^*$ and hence an automorphism of the tree.

\begin{theorem_permutational}[\cite{riv:perm}]
A polynomial $f(x)=a_0+a_1x+ \cdots +a_tx^t$ in $\Z[x]$ induces a permutation of $\ZZ$ for each positive integer $n$ if and only if it satisfies the following conditions:
\begin{itemize}
\item[(i)] $a_1 \equiv 1 \pmod2$
\item[(ii)] $a_2+a_4+a_6+ \cdots \equiv 0 \pmod2$
\item[(iii)] $a_3+a_5+a_7+ \cdots \equiv 0 \pmod2$
\end{itemize}
\end{theorem_permutational}

Using Theorem~\ref{the:sec} we study level transitivity of a permutational polynomial $f$ by counting the number of nontrivial actions of the sections of $f$ in each level. In~\cite[Proposition (4.6)]{bass_or:cyclic}, it was shown that $f$ acts level transitively on the rooted binary tree if and only if the number of nontrivial actions of the sections of $f$ in each level of $\{0,1\}^*$ is odd. Using this fact, we determine the conditions that $f$ has to meet in order to be level transitive. These conditions are summarized in the following theorem which is the main result of the paper.
\begin{theorem_main}
Let $f(x)=a_0+a_1x+ \cdots +a_tx^t$ be a permutational polynomial acting on the rooted binary tree. Then this action is level transitive if and only if the following conditions hold:
\begin{itemize}
\item[(i)] $a_0 \equiv 1 \pmod2$
\item[(ii)] $2a_2 \equiv a_3+a_5+ \cdots \pmod4$
\item[(iii)] $a_2+a_1-1 \equiv a_4+a_6+ \cdots \pmod4$
\end{itemize}
\end{theorem_main}
Combining the conditions of Theorems~\ref{the:riv} and~\ref{the:main}, we obtain the conditions of Theorem~\ref{the:larin} using a completely different approach. We hope that this new tool can be utilized to attack other problems from $p$-adic analysis. For example, the suggested approach may work to characterize ergodicity of polynomials acting on $\Z_p$ for $p>2$.

The structure of the paper is as follows. In Section~\ref{sec:pre} we set up necessary notation regarding rooted trees and their automorphisms. Section~\ref{sec:perm} describes endomorphisms and automorphisms of rooted trees arising from polynomial actions on the ring of $d$-adic integers. Finally, Section~\ref{sec:trans} contains the main result on the conditions for a permutational polynomial to act level transitively on the binary tree, which is equivalent to an ergodic action of the polynomial on $\Z_2$.

\textbf{Acknowledgement.} The authors would like to thank Zoran \v Sun\'ic and Said Sidki for fruitful and motivating discussions during the preparation of the manuscript.

\section{Preliminaries}
\label{sec:pre}
We start this section with the notation and terminology that will be used throughout the paper. A \emph{tree} is a connected graph with no cycles. A \emph{rooted tree} is a tree with one vertex selected to be the root. In any connected graph, a metric (called combinatorial metric) can be defined such that the distance between any pair of vertices is the number of edges in a shortest path (geodesic) connecting them. The $n^{th}$ level of a rooted tree is defined as the set of vertices whose distance from the root is $n$. Since the tree has no cycles, for each vertex $v$ of the $n^{th}$ level there is only one path from the root to $v$. The vertex $u$ in this path that lies in level $n-1$ is called the parent of $v$. The vertex $v$ is called a child of $u$. Hence every vertex except for the root has exactly one parent and may have some children.

A rooted tree is said to be \emph{$d$-regular} or \emph{$d$-ary} if there exists some positive integer $d$ such that each vertex of the tree has exactly $d$ children. So all these trees have infinitely many levels. In case $d=2$, such a tree is called a \emph{rooted binary tree} which represents the main interest in our paper. We will always visualize those trees such that they grow from top to bottom. So the root is the highest vertex and the children of each vertex $v$ are located right under $v$.

We will label the vertices of a rooted $d$-ary  tree by finite words over a finite alphabet $X=\{0,1, \ldots ,d-1\}$. Equivalently the set $X^*$ of all finite words over $X$ can be given the structure of a rooted $d$-ary tree by declaring that $v$ is adjacent to $vx$ for each $v \in X^*$ and $x \in X$. Thus the empty word corresponds to the root and for each positive integer $n$ the set $X^n$ corresponds to the $n^{th}$ level of the tree. The example of the rooted binary tree is shown in Figure~\ref{fig:bin_tree}.
\begin{figure} [h!]
\begin{center}
\epsfig{file=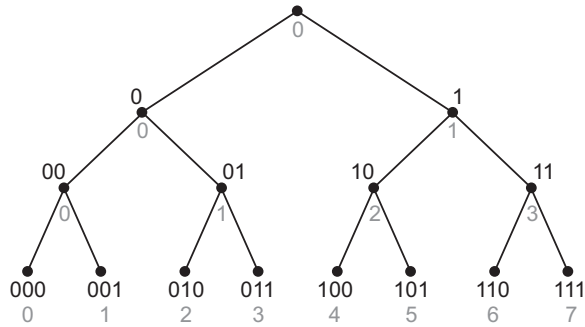,width=3in}
\caption{Standard Numbering of vertices in the binary tree\label{fig:bin_tree}}
\end{center}
\end{figure}

We will identify the $n^{th}$ level of $X^*$ with the ring $\Zd$ by identifying a vertex $x_0x_1 \ldots x_{n-1} \in X^n$ with $x_0+dx_1+ \cdots +d^{n-1}x_{n-1} \in \Zd$. For example, the vertices $00$, $01$, $10$ and $11$ of the second level of the rooted binary tree are identified with $0$, $2$, $1$ and $3$, respectively as shown in Figure~\ref{fig:bin_tree_mix}. Moreover, the boundary of the tree can be naturally identified with the ring $\Z_d$ of $d$-adic integers.

\begin{figure} [h!]
\begin{center}
\epsfig{file=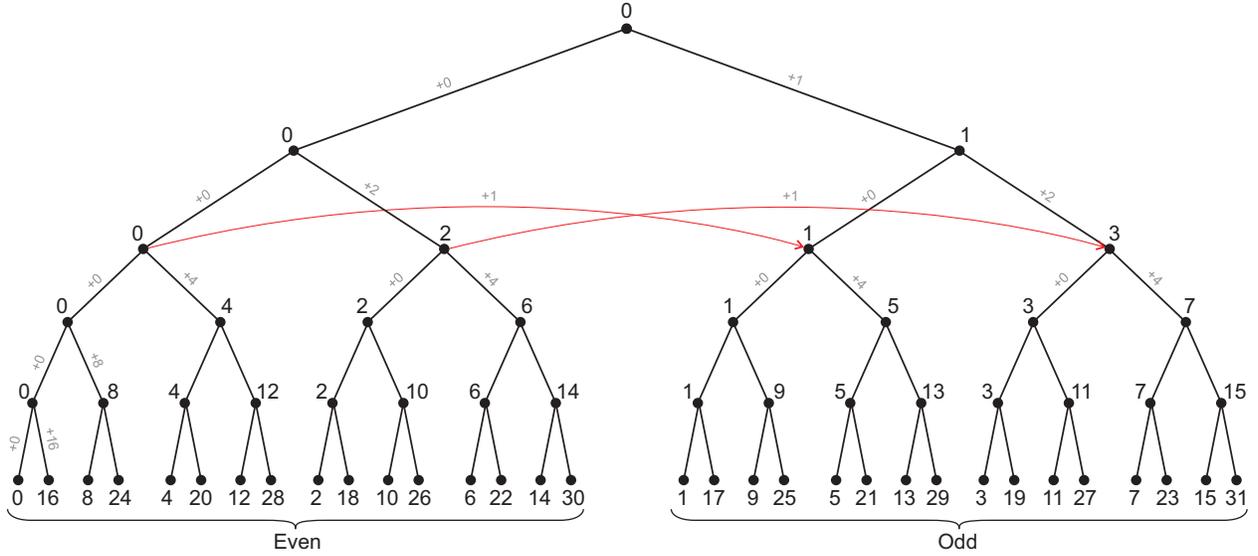,width=\textwidth}
\caption{Dyadic Numbering of vertices in the binary tree\label{fig:bin_tree_mix}}
\end{center}
\end{figure}
The way we identified the $n^{th}$ level of $X^*$ with $\Zd$ may not be the most natural way. A more natural way to do that is to identify a vertex $x_0x_1 \ldots x_{n-1}$ with its $d$-ary expansion $x_{n-1}+dx_{n-2}+ \cdots +d^{n-1}x_0$. So the vertices $00$, $01$, $10$ and $11$ of the second level of the binary tree are identified with $0$, $1$, $2$ and $3$, respectively, as shown in Figure~\ref{fig:bin_tree}. However, we adopt the first identification so that mappings induced by polynomials on $X^*$ preserve the adjacency relation as we will see later.

\begin{definition}
An \emph{endomorphism} of $X^*$ is a map from the set of vertices of $X^*$ to itself which preserves the adjacency relation. If such a map is bijective, it is called an automorphism of $X^*$.
\end{definition}
An automorphism of $X^*$ preserves the degree of each vertex as well as the distance from each vertex to the root. Since the root is the only vertex with degree $d$, it is invariant under all the tree automorphisms. Also the levels of the tree are invariant under automorphisms since the distance is preserved. The group of all automorphisms of $X^*$ is denoted by $\Aut X^*$.

Another important concept that we want to introduce here is the definition of the section of an endomorphism of $X^*$ at a vertex.
\begin{definition}
Let $h$ be an endomorphism of $X^*$ and $x \in X$. For any word $v \in X^*$, we have $h(xv)=h(x)v'$ for some $v' \in X^*$. The map $h|_x \colon X^* \rightarrow X^*$ given by
\begin{equation}
\label{equ:sec}
h|_x(v)=v'
\end{equation}
clearly defines an endomorphism of $X^*$ and is called the \emph{section} of $h$ at vertex $x$. Inductively we can define the section of $h$ at a vertex $v=x_1x_2 \ldots x_n$ as $h|_v=h|_{x_1}|_{x_2} \ldots |_{x_n}$.
\end{definition}
In order to fully define the action of an endomorphism $h$ on $X^*$, we need to specify its action on the first level as well as its sections at the vertices of the first level. To see the action of $h$ on the second level, we do the same with the sections of $h$ at the vertices of the first level and so on. In case of an endomorphism $h$ of a rooted binary tree, the action of $h$ on the first level is either trivial or is a switch of $0$ and $1$. Another language to define an endomorphism of $X^*$ is to give its wreath recursion which also specifies its action on the first level and its sections at the vertices of the first level. This language makes computations easier. Since we will not do any computations with endomorphisms, this approach will not be used here.

We can visualize the action of $h$ on $X^*$ using what is called the \emph{full portrait} of $h$. The full portrait of $h$ is a labeled infinite rooted $d$-ary tree with the root labeled by the name of the endomorphism $h$ and each vertex $v$ labeled by $h|_v$. Under each vertex $v$, we usually write the name of the mapping that $h|_v$ defines on the first level of the subtree hanging from $v$. In the case of a rooted binary tree, we draw a little arc (called switch) connecting the two edges hanging down from  $v$ if $h|_v$ acts nontrivially on the first level of the subtree hanging from $v$. If there is no switch, it means the action is trivial.

One of the basic examples of automorphisms of the rooted binary tree is the adding machine which we will denote throughout the paper by $\tau$. It gets its name from the fact that its action on the boundary of the tree, which is identified by the ring $\Z_2$, is equivalent to adding one to the input. The sections of $\tau$ at the vertices $0$ and $1$ are respectively $e$ and $\tau$, where $e$ is the identity automorphism. And $\tau$ acts nontrivially on the first level. The full portrait of $\tau$ is shown in Figure~\ref{fig:adding_machine}.

\begin{figure}
\begin{center}
\epsfig{file=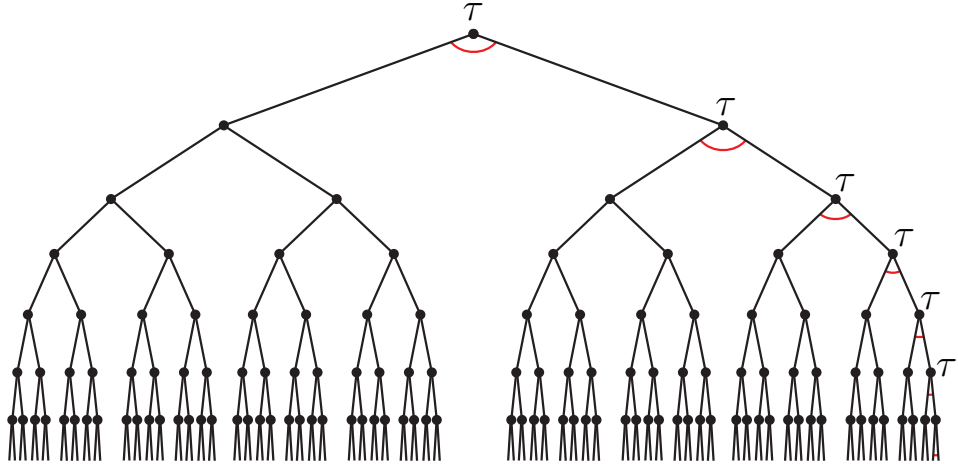,width=5in}
\caption{The full portrait of the adding machine\label{fig:adding_machine}}
\end{center}
\end{figure}

The next definition introduces the notion of level transitivity which is the core concept of our paper. Also a necessary and sufficient condition for an automorphism to act level transitively on a rooted binary tree is provided in the next theorem, that is a partial case of a more general result~\cite[Proposition (4.6)]{bass_or:cyclic}.
\begin{definition}
An automorphism  $h$ of $X^*$ is said to act \emph{level transitively} on $X^*$ if it acts transitively on each level of $X^*$.
\end{definition}
\begin{proposition}
\label{pro:tra}
Let $h$ be an automorphism of the rooted binary tree $\{0,1\}^*$. Then $h$ acts level transitively if and only if the full portrait of $h$ has an odd number of switches (nontrivial actions) in each level including the zeroth level.
\end{proposition}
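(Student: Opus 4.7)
The plan is to prove by induction on $n\geq 0$ the finer statement that $h\in\Aut\{0,1\}^*$ acts transitively on level $n$ if and only if the numbers of switches in levels $0,1,\ldots,n-1$ of the full portrait of $h$ are all odd. Proposition~\ref{pro:tra} then follows by letting $n$ range over all positive integers. Write $s_k(h)$ for the number of switches at level $k$.

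Three ingredients drive the induction. The first is immediate from the definition of sections: the vertices of level $k+1$ of $\{0,1\}^*$ are exactly the vertices of level $k$ in the two subtrees hanging below the root, whence
\[
s_{k+1}(h)=s_k(h|_0)+s_k(h|_1).
\]
The second is that $s_k$ descends to a group homomorphism $\Aut\{0,1\}^*\to\Z/2\Z$: writing the switches of $a,b$ at a vertex $v$ as $\alpha(v),\beta(v)\in\{0,1\}$, an elementary computation from the definition shows that the switch of $ab$ at $v$ equals $\beta(v)+\alpha(b(v))\pmod{2}$, and summing over $v$ at level $k$ (using that $b$ permutes that level) gives
\[
s_k(ab)\equiv s_k(a)+s_k(b)\pmod{2}.
\]
The third is a recursive criterion for transitivity on level $n+1$: if the root switch is trivial, $h$ preserves each subtree and so fails to be transitive already on level $1$; if it is nontrivial, $h$ swaps the two subtrees and $h^2$ restricted to the subtree rooted at $0$ coincides with $h|_1h|_0$, so the $h$-orbit of $0^{n+1}$ has length exactly twice that of the $h|_1h|_0$-orbit of $0^n$. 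Consequently, $h$ is transitive on level $n+1$ if and only if $s_0(h)$ is odd and $h|_1h|_0$ is transitive on level $n$.

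Combining the first two ingredients yields
\[
s_k(h|_1h|_0)\equiv s_k(h|_0)+s_k(h|_1)\equiv s_{k+1}(h)\pmod{2},
\]
and the induction closes: assuming the finer statement for $n$, the third ingredient together with the displayed congruence shows that $h$ is transitive on level $n+1$ iff $s_0(h)$ is odd and $s_{k+1}(h)$ is odd for $k=0,\ldots,n-1$, i.e., iff $s_0(h),\ldots,s_n(h)$ are all odd. The case $n=0$ is vacuous.

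The main obstacle is the additivity modulo $2$ of the switch count under composition; once this bookkeeping lemma is in hand, the remainder is a clean recursion. A minor care is required with the left/right convention in the wreath decomposition, since this determines whether $h^2$ on the subtree rooted at $0$ equals $h|_1h|_0$ or $h|_0h|_1$, but these automorphisms have identical switch counts at every level, so the argument is insensitive to this choice.
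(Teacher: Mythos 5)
Your proof is correct: the recursion $s_{k+1}(h)=s_k(h|_0)+s_k(h|_1)$, the additivity of the switch count modulo $2$ under composition, and the reduction of transitivity on level $n+1$ to transitivity of $(h^2)|_0=h|_0h|_1$ on level $n$ together close the induction cleanly, and your remark about the composition convention correctly disposes of the only ambiguity. The paper itself omits the argument, citing \cite{bass_or:cyclic} and noting only that it is a short induction on the level of the tree, which is precisely the induction you have supplied.
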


\begin{remark}
The proof of this theorem involves induction on the level of the tree and is quite short. We leave it as an exercise and only point out that even though this is (the only) external result that we use in this paper, its proof can be included here essentially at no extra cost, so it does not affect our claim that our proof is self contained (modulo the result of Rivest~\cite{riv:perm} on the characterization of permutational polynomials).
\end{remark}

\begin{example}
As we can see in Figure~\ref{fig:adding_machine}, the adding machine $\tau$ has exactly one switch in each level. So the last theorem asserts that $\tau$ acts level transitively on the rooted binary tree.
\end{example}

\section{Endomorphisms of Rooted Trees Arising from Polynomials over $\Z$}
\label{sec:perm}
For a fixed integer $d\geq 2$, each polynomial $f(x) \in \Z[x]$ induces mappings $f_n \colon \Zd\rightarrow\Zd$ for all positive integers $n$ by taking the evaluation map modulo $d^n$. Identifying the $n^{th}$ level of the rooted $d$-ary tree $X^*$ with the ring $\Zd$, the polynomial $f$  gives rise to a mapping on the whole tree. In the next proposition, we show that this mapping is always an endomorphism of the rooted tree as it preserves the root and the adjacency relation. If for some $d$ the mapping $f_n$ is a bijection for each $n$, then $f$ induces a permutation of each level of the rooted $d$-ary tree $X^*$ and hence induces an automorphism of $X^*$. From now on, we will use the term \emph{permutational polynomial} to denote a polynomial in $\Z[x]$ that induces an automorphism of $X^*$ and to denote the automorphism it induces on $X^*$ as well. The same letter will be used to refer to both functions and no confusion will arise.

\begin{proposition}
\label{pro:distinct}
Let $f(x) \in \Z[x]$. Then $f$ induces an endomorphism on the rooted $d$-ary tree $X^*$. Moreover, different polynomials over $\Z$ induce different endomorphisms on $X^*$.
\begin{proof}
We pick two adjacent vertices $u$ and $v$ of the tree such that $u$ is the parent of $v$. So $$u=x_0+dx_1+ \cdots +d^{n-1}x_{n-1} \in X^n$$ and $$v=x_0+dx_1+ \cdots +d^{n-1}x_{n-1}+d^nx_n \in X^{n+1}$$ for some $x_0, \ldots ,x_n \in \{0,1, \ldots ,d-1\}$ and $n \geq 0$.
Let
\[f(u)=y_0+dy_1+ \cdots +d^{n-1}y_{n-1}\] and
\[f(v)=y'_0+dy'_1+ \cdots +d^{n-1}y'_{n-1}+d^ny'_n,\]
where $y_0, \ldots ,y_{n-1},y'_0, \ldots ,y'_n \in \{0,1, \ldots ,d-1\}$.
Since $u \equiv v \pmod {d^n}$, it follows that $f(u) \equiv f(v) \pmod {d^n}$. Thus $$y_0+dy_1+ \cdots +d^{n-1}y_{n-1}=y'_0+dy'_1+ \cdots +d^{n-1}y'_{n-1}$$ By uniqueness of the $d$-ary expansion, we obtain $y_i=y'_i$ for $i=0,\ldots,n-1$. Hence $f(u)$ is the parent of $f(v)$. This means that $f$ preserves the adjacency relation so it is an endomorphism of $X^*$.

Now we consider two different polynomials $g,h \in \Z[x]$. We can find an integer $k$ such that $g(k)\neq h(k)$. Let $N$ be the smallest positive integer such that $g(k) \not \equiv h(k) \pmod{d^N}$. Then the actions of $g$ and $h$ on the $N^{th}$ level of $X^*$ are different.
\end{proof}
\end{proposition}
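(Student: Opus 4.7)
The plan is to treat the two assertions separately: first, that $f$ really is an endomorphism of the tree, i.e.\ preserves the parent-child adjacency; and second, that the correspondence $f \mapsto$ (induced endomorphism) is injective on $\Z[x]$.

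For the first part, I would unravel the identification of $X^n$ with $\Zd$ and observe that, under the chosen labeling $x_0 x_1 \ldots x_{n-1} \leftrightarrow x_0 + d x_1 + \cdots + d^{n-1} x_{n-1}$, the parent of a vertex $v \in X^{n+1}$ corresponds to the residue $v \bmod d^n$. Thus the parent-child relation on the tree is recast as the mod-$d^n$ projection relation between consecutive levels of $\Zd$. Now pick adjacent vertices $u \in X^n$ and $v \in X^{n+1}$ with $u$ the parent of $v$; then $u \equiv v \pmod{d^n}$, and since $f$ has integer coefficients, $f(u) \equiv f(v) \pmod{d^n}$. Reading the $d$-ary expansions of $f(u)$ and $f(v)$ modulo $d^n$ and invoking uniqueness of $d$-ary expansions forces the first $n$ digits of $f(v)$ to agree with those of $f(u)$, so $f(u)$ is the parent of $f(v)$ in the tree. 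Hence $f$ preserves adjacency, which is exactly what it means to be an endomorphism of $X^*$.

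For the injectivity claim, let $g, h \in \Z[x]$ be distinct. Their difference $g - h$ is a nonzero polynomial, hence has only finitely many integer zeros, so there exists some $k \in \Z$ with $g(k) \neq h(k)$. Choosing any $N$ large enough that $d^N > |g(k) - h(k)|$ (for instance, the least such $N$) gives $g(k) \not\equiv h(k) \pmod{d^N}$, so the image of the level-$N$ vertex $k \bmod d^N$ differs under $g$ and $h$, and the two induced endomorphisms disagree on the $N$-th level.

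Neither half is computationally heavy; the only place where one can stumble is the identification of $X^n$ with $\Zd$. The ``natural'' identification $x_0 x_1 \ldots x_{n-1} \leftrightarrow x_{n-1} + d x_{n-2} + \cdots + d^{n-1} x_0$ does \emph{not} make the parent of a level-$(n{+}1)$ vertex correspond to reduction mod $d^n$, so the whole argument would break down. The slightly unusual ordering adopted in the paper is exactly what makes the mod-$d^n$ congruence $u \equiv v \pmod{d^n}$ encode the tree adjacency, which is what I expect to be the main conceptual subtlety — once this is in place, the proof is essentially a one-line invocation of ``integer-coefficient polynomials respect congruences'' together with uniqueness of $d$-ary digits.
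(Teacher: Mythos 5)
Your proposal is correct and follows essentially the same route as the paper's own proof: both parts (adjacency preservation via the congruence $u \equiv v \pmod{d^n}$ and uniqueness of $d$-ary expansions, and injectivity via a point $k$ with $g(k)\neq h(k)$ and a level $N$ where $g(k)\not\equiv h(k)\pmod{d^N}$) match the published argument. Your added remarks --- that $g-h$ has only finitely many integer zeros and that the nonstandard digit ordering is what makes the parent map coincide with reduction mod $d^n$ --- are correct elaborations of steps the paper leaves implicit.
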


Our next goal is to completely describe endomorphisms induced by polynomials by explicitly describing their sections at all the vertices of $X^*$. Before we proceed to the next theorem, we need to introduce some basic notation to make our expressions less cumbersome.
\vspace{2mm}

\noindent\textbf{Notation.} Given two integers $d>0$ and $a$, we can use the division algorithm to find two unique integers $q$ and $r$ such that $a=dq+r$ where $r \in \{0,1, \ldots ,d-1\}$. We will adopt the notation $Q_d(a)=q$ and $R_d(a)=r$ i.e., $Q_d(a)=a \mathop{\mathrm{div}} d$ and $R_d(a)$ is the remainder of $a$ when divided by $d$. So for each integer $a$ we always have
\begin{equation}
\label{equ:Q_and_r}
a=R_d(a)+dQ_d(a).
\end{equation}

\begin{theorem}
\label{the:sec}
Given a polynomial $f(x)=a_0+a_1x+ \cdots +a_tx^t \in \Z[x]$ inducing an endomorphism of $X^*$, the image of a vertex $x_0 \in X$ under the induced endomorphism is $R_d(f(x_0))$ and the section of $f$ at $x_0$ is again a polynomial given by the equation:
\begin{equation}
\label{equ:secf}
f|_{x_0}(x)=Q_d(f(x_0))+\sum_{i=1}^{t}\frac{f^{(i)}(x_0)}{i!}d^{i-1}x^i
\end{equation}
\end{theorem}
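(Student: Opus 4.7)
The plan is to unwind the definition of a section directly and combine it with the Taylor expansion of $f$ at the integer $x_0$. Since $X^1 = X$ is identified with $\Z/d\Z$, the image of the vertex $x_0 \in X$ under $f$ is $f(x_0) \bmod d = R_d(f(x_0))$, which handles the first assertion and also tells us that for every $v \in X^*$ the $d$-ary digit of $f(x_0 v)$ in position $0$ equals $R_d(f(x_0))$.

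For the section formula, I would use the definition of $f|_{x_0}$ through the identification of levels with residue rings. Concretely, a vertex $x_0 v$ with $v \in X^{n-1}$ corresponds to the integer $x_0 + d w$ where $w \in \Z/d^{n-1}\Z$ represents $v$. By the definition of the section together with identity~\eqref{equ:Q_and_r}, the polynomial $f|_{x_0}$ must satisfy, for every nonnegative integer $w$ and every $n$,
\begin{equation}
\label{equ:plan_key}
f|_{x_0}(w) \equiv \frac{f(x_0 + dw) - R_d(f(x_0))}{d} \pmod{d^{n-1}}.
\end{equation}
Since this congruence must hold for arbitrarily large $n$, it suffices to produce an integer polynomial that realizes the right-hand side as an honest equality in $\Z$, and Proposition~\ref{pro:distinct} guarantees uniqueness.

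The honest equality comes from the Taylor expansion of $f$ centered at $x_0$:
\[
f(x_0 + dw) = f(x_0) + \sum_{i=1}^{t}\frac{f^{(i)}(x_0)}{i!}\, d^i w^i.
\]
Subtracting $R_d(f(x_0))$, dividing by $d$, and invoking~\eqref{equ:Q_and_r} for $f(x_0)$ produces exactly the claimed expression
\[
Q_d(f(x_0)) + \sum_{i=1}^{t}\frac{f^{(i)}(x_0)}{i!}\, d^{i-1} w^i,
\]
which is then a candidate for $f|_{x_0}$.

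The one place that needs verification is that the candidate is actually a polynomial over $\Z$: a priori the coefficients $f^{(i)}(x_0)/i!$ are only rational. This is the main (mild) obstacle. I would handle it by expanding $f^{(i)}(x_0)/i! = \sum_{j \geq i}\binom{j}{i}a_j x_0^{j-i}$, which is an integer since each binomial coefficient is an integer and the $a_j$ are integers; equivalently, one can expand $f(x_0 + dw)$ as a polynomial in $w$ with integer coefficients directly by writing $(x_0+dw)^j = \sum_{i}\binom{j}{i}x_0^{j-i}(dw)^i$. With integrality confirmed, the displayed formula defines an element of $\Z[x]$, its evaluation at every nonnegative integer $w$ satisfies~\eqref{equ:plan_key} for all $n$, and hence it agrees with $f|_{x_0}$ as endomorphisms of $X^*$ by Proposition~\ref{pro:distinct}.
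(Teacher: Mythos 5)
Your proposal is correct and follows essentially the same route as the paper: both arguments Taylor-expand $f$ at $x_0$, apply it to $x_0+dw$, and split $f(x_0)=R_d(f(x_0))+dQ_d(f(x_0))$ to read off the first digit and the section. Your explicit check that $f^{(i)}(x_0)/i!=\sum_{j\geq i}\binom{j}{i}a_jx_0^{j-i}\in\Z$ and the appeal to Proposition~\ref{pro:distinct} for uniqueness are details the paper leaves implicit, but they do not change the argument.
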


\begin{proof}
We pick a vertex $u=x_0+dx_1+d^2x_2+ \cdots +d^nx_n$ which has $x_0$ as a prefix. We can write $u$ as $u=x_0+d\cdot v$, where $v=x_1+dx_2+ \cdots +d^{n-1}x_n$ corresponds to the suffix of the word $u$ as in equation~\eqref{equ:sec}.  Using the Taylor expansion of $f$ about $x_0$ and the fact that according to equality~\eqref{equ:Q_and_r}, $f(x_0)=R_d(f(x_0))+dQ_d(f(x_0))$ we obtain $$f(u)=f(x_0+d\cdot v)=\sum_{i=0}^{t}\frac{f^{(i)}(x_0)}{i!}d^iv^i=R_d(f(x_0))+d\cdot\left(Q_d(f(x_0))+\sum_{i=1}^{t}\frac{f^{(i)}(x_0)}{i!}d^{i-1}v^i\right)$$ Therefore, $$f(x_0)=R_d(f(x_0+d\cdot v))=R_d(f(x_0)),$$
where with a slight abuse of notation in the left hand side $x_0$ and $f(x_0)$ denote vertices of the first level of $X^*$, and in the right hand side $x_0$ is an element of $\Z/(d\Z)$. And finally,
$$f|_{x_0}(v)=Q_d(f(x_0+d\cdot v))=Q_d(f(x_0))+\sum_{i=1}^{t}\frac{f^{(i)}(x_0)}{i!}d^{i-1}v^i$$
\end{proof}

\begin{remark}
\label{rem:deg}
Equation~\eqref{equ:secf} immediately implies that all the sections of $f$ are polynomials of the same degree as the degree of $f$. Also since $f^{(t)}(x)=t!a_t$, all the sections of $f$ at the same level $n$ have the same leading coefficient $(d^{t-1})^na_t$.
\end{remark}

\begin{example}
The sections of the polynomial $f(x)=1+3x+2x^2$ at the vertices of the first three levels of the tree are depicted in Figure~\ref{fig:sect_example}.
\end{example}

\begin{figure}
\begin{center}
\epsfig{file=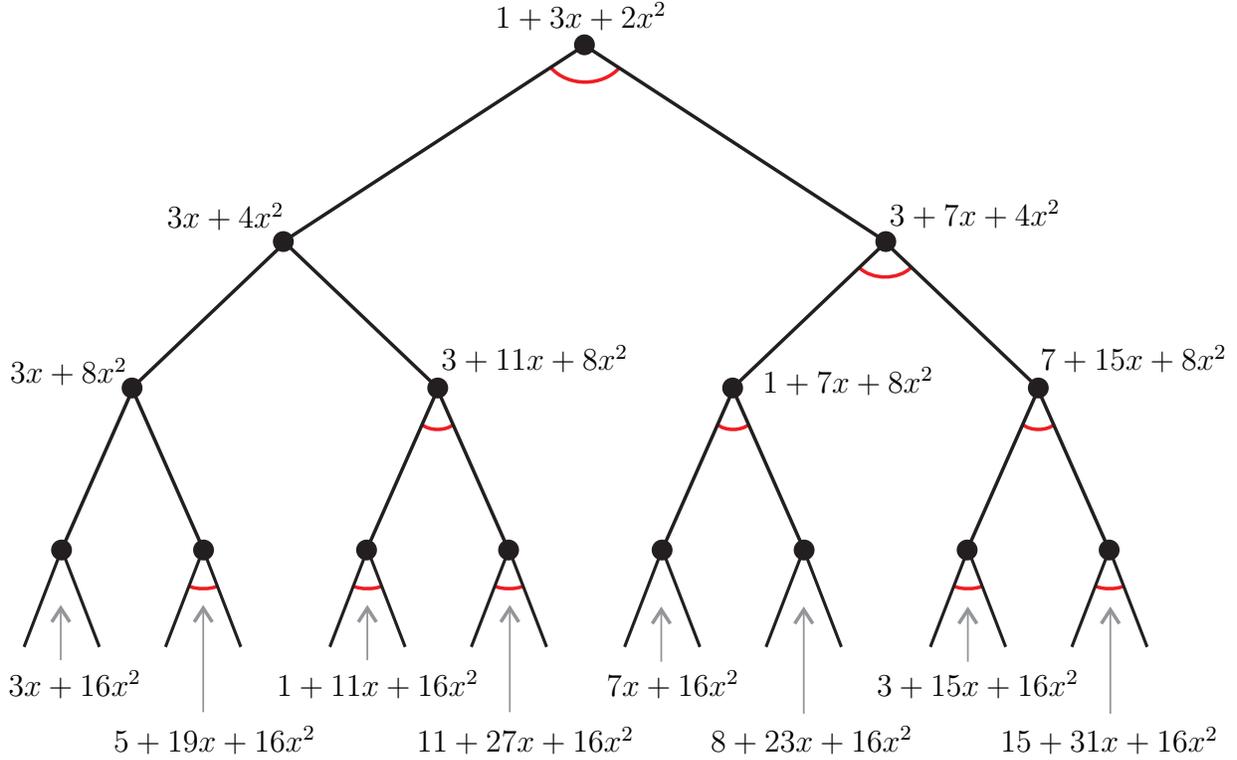}
\caption{Sections of an automorphism of $\{0,1\}^*$ induced by a polynomial $f(x)=1+3x+2x^2$\label{fig:sect_example}}
\end{center}
\end{figure}

\begin{proposition}
A polynomial $f$ acting on $X^*$ has finitely many distinct sections if and only if it is linear.
\end{proposition}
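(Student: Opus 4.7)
The plan is to exploit Remark~\ref{rem:deg}, which already controls the leading coefficients of all sections, and then handle the only substantive case (linear $f$) by a direct boundedness argument on constant terms.

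For the ``only if'' direction, I would suppose $t=\deg f\geq 2$. By Remark~\ref{rem:deg}, every section of $f$ at a vertex of level $n$ is a polynomial of degree $t$ with leading coefficient $d^{n(t-1)}a_t$. Since $d\geq 2$, $a_t\neq 0$, and $t-1\geq 1$, these leading coefficients are pairwise distinct as $n$ varies, so sections at different levels are distinct polynomials. Hence $f$ already has infinitely many distinct sections, and no further argument is required.

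For the ``if'' direction, let $f(x)=a_0+a_1x$. Formula~\eqref{equ:secf} specializes to
\[
f|_{x_0}(x)=Q_d(a_0+a_1 x_0)+a_1 x,
\]
which is again linear with the same leading coefficient $a_1$. Iterating, every section of $f$ at any vertex has the form $b+a_1 x$ for some $b\in\Z$, so it suffices to show that only finitely many integers $b$ arise in this way. For this I would establish the elementary inequality
\[
|Q_d(b+a_1 x_0)|\leq \frac{|b|+|a_1|(d-1)}{d}+1
\]
valid for all $x_0\in\{0,1,\ldots,d-1\}$ (taking care of the two signs of $b+a_1x_0$ separately, since $Q_d$ is floor division). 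Setting $T=d(|a_1|+1)/(d-1)$, one checks that whenever $|b|>T$ the bound above yields $|Q_d(b+a_1 x_0)|<|b|$, while if $|b|\leq T$ every successor remains in $[-T,T]$. Therefore the constant term, starting from $a_0$, strictly decreases in absolute value until it lands in the finite set $\Z\cap[-T,T]$, inside which all further descendants remain. This gives only finitely many possible constant terms, and hence only finitely many distinct sections.

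The only delicate point is treating $Q_d$ carefully on negative inputs when verifying the displayed inequality; once this is set up cleanly, the rest is routine arithmetic, and the argument uses nothing beyond Theorem~\ref{the:sec} and Remark~\ref{rem:deg}.
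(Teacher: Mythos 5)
Your proof is correct, and its overall structure coincides with the paper's: the nonlinear case is dispatched exactly as in the paper via Remark~\ref{rem:deg} (strictly speaking you should also invoke the second half of Proposition~\ref{pro:distinct} to pass from ``the sections are distinct polynomials'' to ``they are distinct endomorphisms of $X^*$,'' which is exactly what the paper cites), and the linear case reduces, as in the paper, to showing that the constant terms of the sections $a_1x+c$ range over a finite set. The one genuine difference is how that finiteness is obtained. The paper observes the single inequality $|Q_d(n_1+n_2+\cdots+n_d)|\leq\max\{|n_1|,|n_2|,\ldots,|n_d|\}$, which applied to $Q_d(c+a_1+\cdots+a_1)$ immediately gives the uniform, level-independent bound $|c|\leq\max\{|a_1|,|a_0|\}$ for every section, with no iteration needed. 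You instead prove $|Q_d(b+a_1x_0)|\leq(|b|+|a_1|(d-1))/d+1$ and run a descent argument with the threshold $T=d(|a_1|+1)/(d-1)$: above $T$ the absolute value of the constant term strictly drops, and $[-T,T]$ is forward-invariant. Your version checks out (the inequality is valid for floor division on negative inputs, and the descent terminates since the constant terms are integers), but it is somewhat more work for a slightly less clean bound; the paper's max-inequality buys the same conclusion in one line.
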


\begin{proof}
First we show that a linear polynomial $f(x)=ax+b$ acting on $X^*$ has finitely many sections. From equation~\eqref{equ:secf}, we see that the section of $f$ at a vertex $x_0 \in X$ is given by $f|_{x_0}(x)=ax+Q_d(b+x_0a)$. Hence all the sections of $f$ are linear polynomials with the same leading coefficient $a$. So the number of sections of $f$ is bounded up by the number of distinct constant terms of these sections. Since the constant term can be written as $Q_d(b+a+ \cdots +a)$ where we have exactly $x_0 \leq d-1$ summands equal to $a$, it is enough to notice that for any collection of integers $n_1,n_2, \ldots ,n_d$ we have $\displaystyle{|Q_d(n_1+n_2+ \cdots +n_d)|} \leq \max\{|n_1|,|n_2|, \ldots ,|n_d|\}$. Therefore, any section of $f$ has the form $ax+c$ where $|c|\leq \max\{|a|,|b|\}$.

The fact that any nonlinear polynomial $f(x)=a_0+a_1x+ \cdots +a_tx^t$ acting on $X^*$ has infinitely many sections follows immediately from Remark~\ref{rem:deg} and Proposition~\ref{pro:distinct}.
\end{proof}

Now we turn our attention to the permutational polynomials inducing automorphisms of $X^*$. First of all, we recall the definition.

\begin{definition}
A polynomial $f(x)\in\Z[x]$ is said to be \emph{$d$-permutational} (or, simply, \emph{permutational} if $d$ is clear from the context) if for each $n\geq1$ the mapping $f_n\colon\Z/(d^n\Z)\to \Z/(d^n\Z)$ induced by the evaluation homomorphism is a permutation.
\end{definition}

According to Proposition~\ref{pro:distinct} each permutational polynomial induces an automorphism of the $d$-ary tree $X^*$. The following simple remark follows immediately from the definition.

\begin{remark}
The sections of a permutational polynomial acting on $X^*$ are again permutational.
\end{remark}

For each $d \geq 2$, the set of all linear $d$-permutational polynomials obviously forms a group under the operation of composition. However, the set of all $d$-permutational polynomials does not form a group as it is not closed under taking inverses. In fact, it has the structure of a cancellative monoid as shown in the next proposition.

\begin{proposition}
\label{pro:monoid}
For each $d\geq 2$, the set of all $d$-permutational polynomials forms a cancellative monoid under the operation of composition.
\end{proposition}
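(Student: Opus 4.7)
The plan is to verify the three monoid axioms and then the two cancellation laws, using Proposition~\ref{pro:distinct} as the essential bridge between polynomials and their induced maps on $\Z/(d^n\Z)$. Associativity is free because composition of polynomials (equivalently, of functions) is associative, and the polynomial $e(x) = x$ is trivially $d$-permutational and acts as an identity. So the nontrivial content is closure and cancellation.

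First I would establish closure. Given $d$-permutational polynomials $f, g \in \Z[x]$, the composition $f \circ g$ is again in $\Z[x]$. The key observation is that evaluation modulo $d^n$ respects composition: if $a \equiv b \pmod{d^n}$, then because $g$ has integer coefficients, $g(a) \equiv g(b) \pmod{d^n}$, and applying the same reasoning to $f$ gives $(f\circ g)(a) \equiv (f\circ g)(b) \pmod{d^n}$. This shows $(f \circ g)_n = f_n \circ g_n$ as maps on $\Z/(d^n\Z)$. Since both $f_n$ and $g_n$ are bijections by assumption, so is their composition, and hence $f \circ g$ is $d$-permutational.

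For the cancellation laws, suppose first that $f \circ g = f \circ h$ as polynomials. Passing to the induced maps on each level, $f_n \circ g_n = f_n \circ h_n$; since $f_n$ is a bijection on the finite set $\Z/(d^n\Z)$, we may cancel it to obtain $g_n = h_n$ for every $n$. Thus $g$ and $h$ induce identical endomorphisms of $X^*$, and Proposition~\ref{pro:distinct} forces $g = h$ as polynomials in $\Z[x]$. Right cancellation is symmetric: if $g \circ f = h \circ f$, then $g_n \circ f_n = h_n \circ f_n$, and surjectivity of $f_n$ yields $g_n = h_n$ for each $n$, so again $g = h$ by Proposition~\ref{pro:distinct}.

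I do not anticipate a real obstacle in this argument; the only subtle point is to be explicit that the composition of polynomials, viewed as an endomorphism of $X^*$, coincides with the composition of the corresponding endomorphisms, so that bijectivity at every level can indeed be transferred through the composition. Once that compatibility is noted, both closure and cancellation follow by one-line arguments combining finite-set cancellation with the uniqueness clause of Proposition~\ref{pro:distinct}.
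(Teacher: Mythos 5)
Your proof is correct and follows essentially the same route as the paper: closure and identity are immediate, and cancellation is obtained from the invertibility of the induced maps (you work level by level with the bijections $f_n$, the paper works with the inverse of the induced tree automorphism, which is the same mechanism). If anything, your version is slightly more complete, since you explicitly invoke Proposition~\ref{pro:distinct} to pass from equality of induced maps back to equality of polynomials, a step the paper leaves implicit.
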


\begin{proof}
It is clear that the composition of two $d$-permutational polynomials is again a polynomial and that it induces an automorphism of the rooted $d$-ary tree $X^*$. The polynomial $\mathrm{id}(x)=x$ plays the role of the identity as it clearly induces the identity automorphisms of the tree. so we have the structure of a monoid.

The inverse of an automorphism of $X^*$ induced by a permutational polynomial always exists but it cannot always be induced by a polynomial. Although this is the case for linear permutational polynomials, the inverse of a nonlinear permutational polynomial cannot be induced by a polynomial. If this were true, we would have two polynomials not both linear acting on the ring $\Z_d$ (the boundary of $X^*$) with a trivial composition which is impossible. Still the existence of the inverse makes cancellation legitimate.
\end{proof}

\begin{remark}
If we consider the action of a $d$-permutational polynomial $f$ on some specific level $n$, identified with $\Zd$, the inverse of $f$ on $\Zd$ can always be induced by a permutational polynomial as shown in~\cite{sun:perm}.
\end{remark}

For the rest of the paper, we will consider only permutational polynomials acting on the rooted binary tree $\{0,1\}^*$. The next theorem, introduced by Rivest in \cite{riv:perm}, determines the conditions under which a polynomial $f(x) \in \Z[x]$ induces a permutation of $\ZZ$ for each $n$ and hence an automorphism of $\{0,1\}^*$ (i.e., is a $2$-permutational polynomial).
\begin{theorem}[\cite{riv:perm,sun:perm}]
\label{the:riv}
A polynomial $f(x)=a_0+a_1x+ \cdots +a_tx^t \in \Z[x]$ induces a permutation of $\ZZ$ for each $n$ if and only if it satisfies the following conditions:
\begin{itemize}
\item[(i)] $a_1 \equiv 1 \pmod2$
\item[(ii)] $a_2+a_4+a_6+ \cdots \equiv 0 \pmod2$
\item[(iii)] $a_3+a_5+a_7+ \cdots \equiv 0 \pmod2$
\end{itemize}
\end{theorem}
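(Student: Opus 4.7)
The plan is to reduce the theorem to a short induction on the levels of the tree, using Theorem~\ref{the:sec} to describe the two first-level sections of $f$ explicitly. A polynomial $f$ induces an automorphism of $\{0,1\}^*$ iff it permutes every level, and by the definition of sections this holds iff $f$ permutes the first level and both $f|_0$ and $f|_1$ themselves induce automorphisms. Thus the argument will split into three steps: (a) express the first-level permutation condition in terms of the $a_i$'s, (b) check that conditions (i)--(iii) are preserved when passing from $f$ to each of $f|_0$ and $f|_1$, and (c) run the same analysis in reverse to obtain necessity.

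For step (a), $f(0)\equiv a_0$ and $f(1)\equiv a_0+a_1+\cdots+a_t\pmod 2$, so $f$ permutes the first level iff $a_1+a_2+\cdots+a_t$ is odd; under conditions (i)--(iii) this parity is $1+0+0\equiv 1\pmod 2$. For step (b), Theorem~\ref{the:sec} with $d=2$ yields
\[
f|_0(x)=Q_2(a_0)+\sum_{i=1}^{t}2^{i-1}a_i\,x^i,
\]
so modulo $2$ every coefficient of $x^i$ with $i\geq 2$ vanishes and the coefficient of $x$ is $a_1$; hence conditions (i)--(iii) applied to $f|_0$ collapse to the single requirement $a_1\equiv 1\pmod 2$, i.e.\ condition (i) for $f$. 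Similarly,
\[
f|_1(x)=Q_2(f(1))+\sum_{i=1}^{t}2^{i-1}\Bigl(\sum_{j\geq i}\binom{j}{i}a_j\Bigr)x^i,
\]
and again only the linear coefficient survives mod $2$, equal to $\sum_{j\geq 1}ja_j\equiv a_1+a_3+a_5+\cdots\pmod 2$; conditions (i)--(iii) applied to $f|_1$ collapse to $a_1+a_3+a_5+\cdots\equiv 1\pmod 2$, which follows from (i) and (iii) for $f$. A straightforward induction on the level of the tree then closes the sufficiency direction.

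For necessity, the same computation will run backwards: if $f$ is permutational, then so are its sections $f|_0$ and $f|_1$, so each of $f$, $f|_0$, $f|_1$ must in particular permute the first level. Writing out the first-level bijection condition (``the sum of non-constant coefficients is odd'') for each of these three polynomials yields respectively $a_1+a_2+\cdots+a_t\equiv 1$, $a_1\equiv 1$, and $a_1+a_3+a_5+\cdots\equiv 1\pmod 2$, a system that is equivalent to (i), (ii), (iii). The one spot where I expect to need careful bookkeeping is verifying that the factor $2^{i-1}$ in the section formula absorbs all of the binomial coefficients $\binom{j}{i}$ with $i\geq 2$, so that only the linear term of each section contributes modulo $2$; once that absorption is pinned down, both directions of the induction are immediate.
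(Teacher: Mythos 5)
Your proof is correct, but note that the paper itself does not prove Theorem~\ref{the:riv} at all: it is imported from Rivest \cite{riv:perm} (with an alternative proof in \cite{sun:perm}) and is explicitly flagged in the paper as the single external ingredient of an otherwise self-contained development. Your argument is therefore a genuinely different route, and in fact a rather fitting one, since it proves Rivest's criterion using the paper's own machinery (Theorem~\ref{the:sec} and the recursion on sections), which would remove the one external dependence. The logic checks out: $f$ is bijective on every level iff it is bijective on the first level and both $f|_0$, $f|_1$ are bijective on every level; the first-level condition for a polynomial is that the sum of its nonconstant coefficients is odd; the coefficients of $x^i$ in $f|_0$ and $f|_1$ carry a factor $2^{i-1}$ and hence vanish mod $2$ for $i\geq 2$ (the ``absorption'' you worry about is immediate, since $\frac{f^{(i)}(x_0)}{i!}=\sum_{j\geq i}\binom{j}{i}a_j$ is an integer and $2^{i-1}$ is already even); and the resulting system $a_1+\cdots+a_t\equiv 1$, $a_1\equiv 1$, $a_1+a_3+a_5+\cdots\equiv 1 \pmod 2$ is indeed equivalent to (i)--(iii). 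The sufficiency induction is sound because (i)--(iii) are inherited by both sections, and the necessity direction only uses that sections of permutational polynomials are permutational (a remark already in the paper). By contrast, Rivest's and the Markovski--\v Suni\'c--Gligoroski proofs work directly with congruences modulo $2^n$ rather than with the tree recursion; your version makes transparent the fact that bijectivity modulo $4$ (i.e., on the first two levels) already forces bijectivity modulo all powers of $2$. The only polish I would ask for is to state the level induction explicitly (``for every $n$, every polynomial satisfying (i)--(iii) is bijective on level $n$'') so that the quantifier over polynomials is visible in the inductive hypothesis.
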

The theorem does not put any restriction on the constant term $a_0$ of the polynomial $f$. Assuming that $f$ satisfies the conditions of the theorem i.e., it induces a permutation of $\ZZ$ and that $a_0$ is even (odd), all the evens of $\ZZ$ are mapped to evens (odds) and  all the odds of $\ZZ$ are mapped to odds (evens). A special case of the theorem is when $f(x)=ax+b$ i.e., when $f$ is linear. It defines a permutation of $\ZZ$ if and only if $a$ is odd.

From now on, a permutational polynomial will always mean a $2$-permutational polynomial. Also we will drop the $2$ subscript in the notation of the function $Q_2$ and write $Q(a)=a \mathop{\mathrm{div}} 2$. According to equation~\eqref{equ:secf}, the sections of a permutational polynomial $f(x)=a_0+a_1x+ \cdots +a_tx^t$ acting on $\{0,1\}^*$ at the vertices $0$ and $1$ are given by the two equations:
\begin{equation}
\label{equ:sec0}
f|_0(x)=Q(a_0)+a_1x+2a_2x^2+ 2^2a_3x^3\cdots +2^{t-1}a_tx^t
\end{equation}
and
\begin{multline}
\label{equ:sec1}
f|_1(x)=Q(a_0+a_1+ \cdots +a_t)+(a_1+2a_2+ \cdots +ta_t)x\\
+\bigl(2 \cdot 1a_2+3 \cdot 2a_3+ \cdots +t(t-1)a_t\bigr)x^2+4\sum_{i=3}^{t}\frac{f^{(i)}(1)}{i!}2^{i-3}x^i
\end{multline}
In the case of a linear polynomial $f(x)=ax+b$, we have $f|_0(x)=ax+Q(b)$ and $f|_1(x)=ax+Q(a+b)$. For example, the adding machine $\tau$, which was introduced in the preliminary section, can be represented by a linear permutational polynomial $\tau(x)=x+1$. The sections of $\tau$ at $0$ and $1$ are respectively $x$ and $x+1$.

According to Theorem~\ref{the:riv}, the group of all linear $2$-permutational polynomials is isomorphic to the group
$$\left\{\left( {\begin{array}{ccccc}
a & b \\
0 &1
\end{array} } \right) \mid a,b \in \Z,\,\, a \,\, \text{odd}\, \right\}.$$
A generating set of this group is $\{p,q_{-1},q_3,q_5,q_7, \ldots \}$, where $p(x)=x+1$ and $q_m(x)=mx$ for $m=-1,3,5,7, \ldots $. All the generators have infinite orders except for $q_{-1}$ which is an involution. It was shown in~\cite{bartholdi_s:bsolitar} that for each $m=-1,3,5,7, \ldots $, the subgroup generated by $p$ and $q_m$ is the Baumslag-Solitar group $BS(1,m)=\langle p,q_m \mid q_mp^m=pq_m \rangle$, where we adopt the convention that when the expression $fg$ is used to denote the composition of two functions it means the function $f$ acts first. In the same paper, Bartholdi and \v Sun\'ic considered sections of some linear polynomials acting on $\{0,1\}^*$.

We end this section by introducing one more notation and stating a simple lemma that will be used in Section~\ref{sec:trans} to simplify the proof of the main result.
\vspace{2mm}

\noindent\textbf{Notation.} Suppose $f(x)=a_0+a_1x+ \cdots +a_tx^t \in \Z[x]$ is a permutational polynomial. Then it uniquely defines the following integers $k_f$, $m_f$ and $n_f$ such that
\begin{equation}
\begin{array}{l}
a_1=2k_f+1,\\
a_2+a_4+\cdots=2m_f,\\
a_3+a_5+\cdots=2n_f.
\end{array}
\end{equation}
\vspace{2mm}

The following lemma, in particular, shows that not every permutational polynomial can be a section of another permutational polynomial.

\begin{lemma}
\label{lem:n_f}
Let $f(x)=a_0+a_1x+ \cdots +a_tx^t$ be a permutational polynomial acting on the rooted binary tree $\{0,1\}^*$. Then for each non-root vertex $v\in \{0,1\}^*$, the permutational polynomial $f|_v(x)=b_0+b_1x+ \cdots +b_tx^t$ corresponding to the section of $f$ at $v$, satisfies $n_{f|_v} \equiv 0\pmod 2$ and $b_2 \equiv 0 \pmod2$.
\end{lemma}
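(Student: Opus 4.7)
The plan is to reduce the lemma to the case where $v$ is a single letter, and then to read off the two congruences directly from the section formulas~\eqref{equ:sec0} and~\eqref{equ:sec1}.

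First, by the Remark immediately preceding Proposition~\ref{pro:monoid}, every section of a permutational polynomial is again permutational. Writing any non-root vertex $v$ of length $n\ge 1$ as $v=wx_n$ with $x_n\in\{0,1\}$, we have $f|_v=(f|_w)|_{x_n}$, and $g:=f|_w$ is itself a permutational polynomial. Hence it suffices to verify both conclusions for the level-one sections $g|_0$ and $g|_1$ of an arbitrary permutational polynomial $g(x)=c_0+c_1x+\cdots+c_tx^t$.

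For $g|_0$, formula~\eqref{equ:sec0} gives the coefficient of $x^i$ as $2^{i-1}c_i$ for every $i\ge 2$. Thus the $x^2$-coefficient equals $2c_2$, which is even, while every coefficient of $x^i$ with $i\ge 3$ is divisible by $4$. Summing the odd-index coefficients starting at index $3$ therefore produces a multiple of $4$, which gives $n_{g|_0}\equiv 0\pmod 2$. For $g|_1$, formula~\eqref{equ:sec1} shows that the $x^2$-coefficient equals $\sum_{i\ge 2}i(i-1)c_i$; each summand is even, since $i(i-1)$ is a product of two consecutive integers, so the whole sum is even. For $i\ge 3$, the coefficient of $x^i$ in $g|_1$ has the form $2^{i-1}\cdot\tfrac{g^{(i)}(1)}{i!}$, with $\tfrac{g^{(i)}(1)}{i!}=\sum_{j\ge i}\binom{j}{i}c_j\in\Z$, so it is divisible by $2^{i-1}\ge 4$. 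Adding these yields a multiple of $4$, and therefore $n_{g|_1}\equiv 0\pmod 2$ as well.

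The argument is essentially bookkeeping, so no step is a serious obstacle. The only mildly subtle point is the reduction to level-one sections via $f|_v=(f|_w)|_{x_n}$; once that is in place, the proof is driven entirely by the factor $2^{i-1}$ appearing in front of each $c_i$ with $i\ge 2$ in~\eqref{equ:sec0}, and by the fact that $i(i-1)$ is always even in the $x^2$-term of~\eqref{equ:sec1}.
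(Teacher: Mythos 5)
Your proof is correct and follows essentially the same route as the paper: reduce to level-one sections (using that sections of permutational polynomials are permutational), then read off from~\eqref{equ:sec0} and~\eqref{equ:sec1} that the $x^2$-coefficients are even and the coefficients of $x^i$ for $i\geq 3$ are divisible by $4$. The paper's proof is just a more compressed statement of exactly this bookkeeping.
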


\begin{proof}
Clearly, it is enough to check the conditions only for vertices of the first level. But this trivially follows from equations~\eqref{equ:sec0} and~\eqref{equ:sec1}. We only have to notice that all the coefficients at $x^i$ for $i\geq 3$ in both $f|_0$ and $f|_1$ are divisible by 4 and that the coefficients at $x^2$ in both $f|_0$ and $f|_1$ are divisible by $2$.
\end{proof}

\section{Level Transitivity of Permutational Polynomials Acting on a Rooted Binary Tree}
\label{sec:trans}
We start this section by presenting a couple of basic number theoretic facts that will be used many times in the proof of our main theorem. We introduce a bunch of lemmas without proofs. The proofs are straight forward so we leave them as simple exercises. The first lemma gives some properties of the function $Q(x)$ which was defined in the last section. All the other lemmas follow from the fact that for any positive integer $n$ we have $ax \equiv x \pmod{n}$ if $a$ and $x$ are integers and $a \equiv 1 \pmod{n}$.
\begin{lemma}
\label{lem:Q}
For any three integers $x$, $m$ and $n$, we have:
\begin{itemize}
\item[(i)] $Q(x+2n)=Q(x)+n$
\item[(ii)] $Q(x)+Q(x+1)=x$
\item[(iii)] $Q(x)+Q(x+1+2n)=x+n$
\item[(iv)] $Q(x+4n) \equiv Q(x) \pmod2$
\item[(v)] $Q(x)+Q(x+1+4n) \equiv x \pmod2$
\item[(vi)] If $n \equiv m \pmod4$, then $Q(x+n) \equiv Q(x+m) \pmod2$
\end{itemize}
\end{lemma}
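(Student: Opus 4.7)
The plan is to prove the six identities in the natural order, each either directly from the definition of $Q$ or as a consequence of earlier parts of the lemma. Throughout, I will use the defining relation $x = 2Q(x) + R(x)$ with $R(x) \in \{0,1\}$ from equation~\eqref{equ:Q_and_r} (specialized to $d=2$).

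For (i), I would start from $x = 2Q(x) + R(x)$, add $2n$ to both sides to obtain $x + 2n = 2\bigl(Q(x) + n\bigr) + R(x)$, and invoke uniqueness in the division algorithm to conclude $Q(x+2n) = Q(x) + n$ (and $R(x+2n) = R(x)$). For (ii), I would split into the two cases based on the parity of $x$: if $x = 2k$ then $Q(x) = k$ and $Q(x+1) = Q(2k+1) = k$, summing to $2k = x$; if $x = 2k+1$ then $Q(x) = k$ and $Q(x+1) = Q(2k+2) = k+1$, again summing to $2k+1 = x$.

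The remaining identities follow formally from (i) and (ii). For (iii), write
\[
Q(x) + Q(x+1+2n) = Q(x) + Q(x+1) + n = x + n,
\]
where the first equality uses (i) applied to $x+1$ in place of $x$, and the second uses (ii). Identity (iv) is the special case of (i) where $n$ is replaced by $2n$: $Q(x+4n) = Q(x) + 2n \equiv Q(x) \pmod 2$. Identity (v) is the analogous specialization of (iii): $Q(x) + Q(x+1+4n) = x + 2n \equiv x \pmod 2$. Finally, for (vi), write $n = m + 4k$ for some integer $k$ and apply (iv) to $x + m$ in place of $x$: $Q(x+n) = Q((x+m) + 4k) \equiv Q(x+m) \pmod 2$.

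There is no real obstacle here; the lemma is a catalogue of bookkeeping identities about the floor-division function, and the entire proof amounts to the division-algorithm computation in (i), the two-case parity check in (ii), and then a few lines of substitution. The only mild subtlety is to keep signs straight when $n$ (or $x$) is allowed to be negative, but since the division algorithm as adopted in the paper produces $R_d(a) \in \{0,\ldots,d-1\}$ for all integers $a$, the identity $x + 2n = 2(Q(x)+n) + R(x)$ still has its right-hand remainder in $\{0,1\}$, so uniqueness of quotient and remainder applies without modification.
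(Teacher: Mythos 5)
Your proof is correct. The paper actually states this lemma without proof (``the proofs are straight forward so we leave them as simple exercises''), and your argument --- the division-algorithm computation for (i), the two-case parity check for (ii), and the formal substitutions deriving (iii)--(vi) from those two --- is precisely the intended routine verification, including the correct observation that the convention $R_2(a)\in\{0,1\}$ for all integers $a$ makes the uniqueness argument work even for negative inputs.
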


\begin{lemma}
\label{lem:odd}
Let $a_1,a_2, \ldots ,a_n$ be a collection of odd numbers. Then for any sequence of integers $x_1,x_2, \ldots ,x_n$, we have $a_1x_1+a_2x_2+ \cdots +a_nx_n \equiv x_1+x_2+ \cdots +x_n \pmod2$ i.e., $a_1x_1+a_2x_2+ \cdots +a_nx_n$ has the same parity as $x_1+x_2+ \cdots +x_n$.
\end{lemma}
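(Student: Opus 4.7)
The plan is to reduce the claim to the single-summand case and then sum. Since each $a_i$ is odd, we have $a_i \equiv 1 \pmod 2$, and the elementary fact explicitly cited in the paragraph preceding Lemma~\ref{lem:Q}—namely that $a \equiv 1 \pmod n$ implies $ax \equiv x \pmod n$ for every integer $x$—gives immediately
\[
a_i x_i \equiv x_i \pmod 2
\]
for each index $i$. Adding these $n$ congruences in $\Z/2\Z$ yields
\[
\sum_{i=1}^{n} a_i x_i \;\equiv\; \sum_{i=1}^{n} x_i \pmod 2,
\]
which is exactly the conclusion of the lemma.

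There is essentially no obstacle to overcome: the lemma is a one-line consequence of the single-variable fact about multiplication by an element congruent to $1$ modulo $n$ (specialized to $n=2$), and the author himself groups it with the "simple exercises" right after Lemma~\ref{lem:Q}. The only stylistic decision is whether to argue term-by-term as above, or to write $a_i = 1 + 2c_i$ explicitly for integers $c_i$ and then expand
\[
\sum_{i=1}^{n} a_i x_i \;=\; \sum_{i=1}^{n} x_i \;+\; 2\sum_{i=1}^{n} c_i x_i,
\]
observing that the second sum contributes an even quantity and therefore drops out modulo $2$. Either form is routine, so the shortest presentation is preferable to keep the proof in line with the paper's tone.
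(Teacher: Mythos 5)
Your proof is correct and follows exactly the route the paper intends: the authors leave Lemma~\ref{lem:odd} as an exercise, noting only that it "follows from the fact that $a\equiv 1\pmod n$ implies $ax\equiv x\pmod n$," which is precisely the term-by-term reduction you carry out before summing. Nothing further is needed.
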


\begin{lemma}
\label{lem:mod4}
Let $a_1,a_2, \ldots ,a_n$ and $x_1,x_2, \ldots ,x_n$ be integers such that $a_i \equiv 1 \pmod4$ for $i=1,2, \ldots ,n$. Then $a_1x_1+a_2x_2+ \cdots +a_nx_n \equiv x_1+x_2+ \cdots +x_n \pmod4$.
\end{lemma}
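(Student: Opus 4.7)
The plan is to reduce the claim to a term-by-term computation and then sum. The hypothesis $a_i \equiv 1 \pmod 4$ allows one to write $a_i = 1 + 4k_i$ with $k_i \in \Z$ for each $i = 1, \ldots, n$; multiplying through by $x_i$ yields $a_i x_i = x_i + 4 k_i x_i$. Since the second summand is a multiple of $4$, this gives the per-index congruence $a_i x_i \equiv x_i \pmod 4$, which is exactly the observation the authors highlight just before Lemma~\ref{lem:Q} (that $a \equiv 1 \pmod m$ forces $ax \equiv x \pmod m$), specialized to $m = 4$.

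Adding these $n$ congruences, and using the fact that integer congruence modulo a fixed modulus is preserved under addition, produces $a_1 x_1 + a_2 x_2 + \cdots + a_n x_n \equiv x_1 + x_2 + \cdots + x_n \pmod 4$, which is precisely the statement of the lemma. No genuine obstacle arises here: once the form $a_i = 1 + 4k_i$ is in hand, the rest is a one-line computation, and the only minor choice is whether to phrase the summation step as induction on $n$ or as a direct appeal to additivity of the congruence relation. I would choose the latter for brevity, since nothing in the argument depends on an iterated structure. For completeness, one could also note that the same proof goes through verbatim with $4$ replaced by any positive integer $m$, recovering the general principle the authors invoke.
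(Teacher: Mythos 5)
Your proof is correct and matches the paper's intended argument exactly: the authors leave this lemma as an exercise, noting only that it ``follows from the fact that $ax \equiv x \pmod{n}$ whenever $a \equiv 1 \pmod{n}$,'' which is precisely the per-term congruence you establish via $a_i = 1 + 4k_i$ before summing. Nothing further is needed.
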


\begin{lemma}
\label{lem:squares}
Let $a_1,a_2, \ldots ,a_n$ be a collection of odd numbers and $x_1,x_2, \ldots ,x_n$ be any sequence of integers. Then $a_1^2x_1+a_2^2x_2+ \cdots +a_n^2x_n \equiv x_1+x_2+ \cdots +x_n \pmod4$.
\end{lemma}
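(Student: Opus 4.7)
The plan is to reduce the claim to a direct application of Lemma~\ref{lem:mod4}. That lemma gives exactly the conclusion we want whenever each coefficient is congruent to $1 \pmod 4$, so it suffices to verify the auxiliary fact that $a_i^2 \equiv 1 \pmod 4$ for every odd integer $a_i$, and then invoke the lemma with $a_i^2$ in place of $a_i$.

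To establish the auxiliary congruence I would simply write $a_i = 2k_i + 1$ for some integer $k_i$ and expand
\begin{equation*}
a_i^2 = 4k_i^2 + 4k_i + 1 = 4k_i(k_i+1) + 1 \equiv 1 \pmod 4.
\end{equation*}
(Equivalently, any odd integer is congruent to $\pm 1$ modulo $4$, and in either case its square is $1$ modulo $4$.) With this fact in hand, Lemma~\ref{lem:mod4} applied to the coefficients $a_1^2, a_2^2, \ldots, a_n^2$ and the integers $x_1, x_2, \ldots, x_n$ immediately yields
\begin{equation*}
a_1^2 x_1 + a_2^2 x_2 + \cdots + a_n^2 x_n \equiv x_1 + x_2 + \cdots + x_n \pmod 4,
\end{equation*}
which is the desired conclusion.

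There is no real obstacle here: the entire content is the classical identity that the square of an odd integer is $1 \pmod 4$, combined with the preceding lemma. Should a self-contained argument be preferred over the appeal to Lemma~\ref{lem:mod4}, one can instead proceed by induction on $n$, the base case being precisely the squares identity above, and the inductive step using the observation that $a_{n+1}^2 x_{n+1} - x_{n+1} = (a_{n+1}^2 - 1)x_{n+1}$ is divisible by $4$. I would present the shorter version that cites Lemma~\ref{lem:mod4}, since it fits the style of the surrounding batch of elementary lemmas and keeps the proof to essentially one line.
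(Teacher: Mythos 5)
Your proof is correct: the paper actually omits a proof of this lemma entirely (it is among the ``straight forward'' lemmas left as exercises), and your reduction to Lemma~\ref{lem:mod4} via the identity $a_i^2=4k_i(k_i+1)+1\equiv 1\pmod 4$ for odd $a_i$ is exactly the intended one-line argument. Nothing is missing.
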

Now it is time to introduce our main theorem that determines the conditions under which a permutational polynomial acts level transitively on the rooted binary tree $\{0,1\}^*$. Obviously a permutational polynomial acts nontrivially on the first level of the tree if and only if its constant term is odd. According to Proposition~\ref{pro:tra}, a permutational polynomial $f$ acts level transitively on $\{0,1\}^*$ if and only if in each level of the tree, the number of sections of $f$ with odd constant terms is odd, or equivalently, the sum of these constant terms is odd. The next proposition determines the conditions that a linear permutational polynomial has to meet in order to act level transitively on $\{0,1\}^*$. The general result is given by Theorem~\ref{the:main} which we prove by induction on level. The proof of the linear case is provided first so that the idea of the induction becomes clear and then the general case is considered. Actually the proof given here is essentially different from the proofs of similar results introduced in~\cite{ana:erg} and in~\cite{larin:tr}.

\begin{proposition}
\label{pro:lin}
Let $f(x)=ax+b$ be a permutational polynomial acting on the rooted binary tree $\{0,1\}^*$. Then this action is level transitive if and only if the following conditions hold:
\begin{itemize}
\item[(i)] $b \equiv 1 \pmod2$
\item[(ii)] $a \equiv 1 \pmod4$
\end{itemize}
\end{proposition}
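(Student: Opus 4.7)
The plan is to apply Proposition~\ref{pro:tra}: $f$ is level transitive iff for every $n\geq 0$ the number of sections $f|_v$ with $v$ at level $n$ whose action on the first level of the subtree below $v$ is nontrivial is odd. Since any linear polynomial $g(x)=ax+c$ acts nontrivially on the first level of $\{0,1\}^*$ precisely when the constant term $c$ is odd, the parity of the number of switches at level $n$ equals that of the integer
\[S_n := \sum_{v\in\{0,1\}^n} (\text{constant term of } f|_v).\]
Thus the task reduces to showing that $S_n$ is odd for every $n\geq 0$ if and only if $b$ is odd and $a\equiv 1\pmod 4$.

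Next, I would use the explicit formulas for the sections of a linear polynomial recorded just after Theorem~\ref{the:riv}: if $g(x)=ax+c$, then $g|_0(x)=ax+Q(c)$ and $g|_1(x)=ax+Q(a+c)$. Since $a$ is odd by Theorem~\ref{the:riv}, write $a=2k+1$. Applying Lemma~\ref{lem:Q}(iii) with $x=c$ and $n=k$ gives
\[Q(c)+Q(a+c)=Q(c)+Q(c+1+2k)=c+k.\]
Summing this identity over the $2^n$ vertices of level $n$ produces the recursion $S_{n+1}=S_n+2^n k$ with base case $S_0=b$, which telescopes to
\[S_n=b+(2^n-1)k.\]

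To conclude, I would simply read off parities. The case $n=0$ forces $b$ odd; for $n\geq 1$ the factor $2^n-1$ is odd, so $S_n$ is odd iff $b+k$ is odd, which given $b$ odd is equivalent to $k$ being even, i.e.\ to $a\equiv 1\pmod 4$. Conversely, under these two conditions $S_n=b+(2^n-1)k$ is odd for every $n\geq 0$, so Proposition~\ref{pro:tra} delivers level transitivity.

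The proof is essentially bookkeeping once the invariant $S_n$ is identified; no step is a genuine obstacle. The one subtlety worth highlighting is that Lemma~\ref{lem:Q}(iii) collapses $Q(c)+Q(a+c)$ to an expression whose dependence on $c$ is trivial modulo $2$, which is precisely what lets the sum $S_n$ be computed in closed form without tracking the individual sections. The main value of laying out the linear case separately is that it exhibits the template for the more elaborate induction needed in the proof of Theorem~\ref{the:main}.
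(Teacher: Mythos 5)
Your proof is correct and follows essentially the same route as the paper: both reduce level transitivity to the parity of the sum of the constant terms of the level-$n$ sections via Proposition~\ref{pro:tra}, and both propagate that quantity from one level to the next using the linear section formulas together with Lemma~\ref{lem:Q}. The only difference is one of packaging: by using the exact identity of Lemma~\ref{lem:Q}(iii) you get the closed form $S_n=b+(2^n-1)k$ (with $a=2k+1$), which delivers necessity and sufficiency simultaneously, whereas the paper verifies necessity by a separate computation on the second level and proves sufficiency by a mod-$2$ induction based on Lemma~\ref{lem:Q}(v) with $a=4k+1$.
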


\begin{proof}
We first show that the conditions are necessary. If condition $(i)$ is not satisfied, then $f$ does not act transitively on the first level. So we assume that condition $(i)$ is satisfied but condition $(ii)$ is not. We can also use the fact that $a$ is odd since $f$ is permutational. Thus we can write $b=2s+1$ and $a=2r+1$, where $s$ and $r$ are integers and $r$ is odd. The sections of $f$ at $0$ and $1$ are respectively $ax+Q(b)=ax+s$ and $ax+Q(a+b)=ax+r+s+1$. The sum of the two constant terms is $2s+r+1$ which is even. Therefore, $f$ does not act transitively on the second level.

To prove sufficiency of the conditions, we use induction on level. First we can write $a=4k+1$ for some integer $k$ which follows from condition $(ii)$. Since $b$ is odd, $f$ acts transitively on the first level. Now assume that the sections of $f$ at some level $n \geq 0$ are respectively $ax+\alpha_1, \ldots ,ax+\alpha_m$, where $m=2^n$ and suppose that $\alpha_1+ \cdots +\alpha_m \equiv 1 \pmod2$ which serves as the induction hypothesis. The sections of $f$ at level $n+1$ are respectively $ax+\sigma_1,ax+\sigma_2, \ldots ,ax+\sigma_{2m-1},ax+\sigma_{2m}$, where $\sigma_1=Q(\alpha_1),$ $\sigma_2=Q(\alpha_1+4k+1),$ $ \ldots ,$ $\sigma_{2m-1}=Q(\alpha_m),$ $\sigma_{2m}=Q(\alpha_m+4k+1)$. Hence the sum of constant terms of the sections of level $n+1$ is $(\sigma_1+\sigma_2)+ \cdots +(\sigma_{2m-1}+\sigma_{2m})=\left(Q(\alpha_1)+Q(\alpha_1+4k+1)\right)+ \cdots +\left(Q(\alpha_m)+Q(\alpha_m+4k+1)\right) \equiv \alpha_1+ \cdots +\alpha_m \equiv 1 \pmod2$, where we have used Lemma~\ref{lem:Q} part $(v)$. The proof is now complete.
\end{proof}

The adding machine $\tau(x)=x+1$ as well as all its odd powers $\tau^r(x)=x+r$, where $r$ is odd, satisfy the conditions of the last proposition. So they act level transitively on the rooted binary tree $\{0,1\}^*$.
\begin{theorem}
\label{the:main}
Let $f(x)=a_0+a_1x+ \cdots +a_tx^t$ be a permutational polynomial acting on the rooted binary tree $\{0,1\}^*$. Then this action is level transitive if and only if the following conditions hold:
\begin{itemize}
\item[(i)] $a_0 \equiv 1 \pmod2$
\item[(ii)] $2a_2 \equiv a_3+a_5+ \cdots \pmod4$
\item[(iii)] $a_2+a_1-1 \equiv a_4+a_6+ \cdots \pmod4$
\end{itemize}
\begin{proof}
To show that the given conditions are necessary, we first notice that if condition $(i)$ is not satisfied then $f$ does not act transitively on the first level. So we assume that condition $(i)$ is satisfied and hence we can write $a_0=2s+1$ for some integer $s$. Since $f$ is permutational, it satisfies the three conditions of Theorem~\ref{the:riv}. These three conditions could be written respectively as $a_1=2k+1,$ $a_2+a_4+ \cdots =2m$ and $a_3+a_5+ \cdots =2n$ for some integers $k,m$ and $n$. By adding $a_2$ to both sides in condition $(iii)$ and doing some simple algebraic manipulation, conditions $(ii)$ and $(iii)$ can be rewritten as:
\begin{itemize}
\item[($ii'$)] $a_2 \equiv n \pmod2$
\item[($iii'$)] $k+a_2 \equiv m \pmod2$
\end{itemize}
Equations~\eqref{equ:sec0} and~\eqref{equ:sec1} tell us that the constant terms of the sections of $f$ at $0$ and $1$ are respectively $Q(a_0)$ and $Q(a_0+a_1+ \cdots +a_t)$. Their sum is $Q(a_0)+Q(a_0+a_1+ \cdots +a_t)=Q(a_0)+Q(a_0+2k+1+2m+2n)=a_0+k+m+n=2s+(k+n)+m+1$, where we have used Lemma~\ref{lem:Q} part $(iii)$. For $f$ to act transitively on the second level, we must have
\begin{equation}
\label{equ:trans2}
(k+n) \equiv m \pmod2.
\end{equation}
If exactly one of the two conditions $(ii')$ and $(iii')$ is satisfied, then $(k+n) \not\equiv m \pmod2$. However, if the two conditions are both not satisfied, we have $(k+n) \equiv m \pmod2$ so $f$ does act transitively on the second level. We will show that in the latter case, $f$ does not act transitively on the third level by proving that the sum of constant terms of the sections of $f$ at the second level is even.

From equations~\eqref{equ:sec0} and \eqref{equ:sec1}, we can infer that the constant terms of the sections of $f$ at the vertices $00,01,10$ and $11$ are respectively:
\begin{itemize}
\item $Q(Q(a_0))=Q(s)$
\item $Q(s+a_1+2a_2+ \cdots +2^{t-1}a_t)=Q(s+2k+1+2a_2+ \cdots +2^{t-1}a_t)$
\item $Q(Q(a_0+a_1+ \cdots +a_t))=Q(Q(2s+1+2k+1+2m+2n))=Q(1+s+k+m+n)$
\item $Q((1+s+k+m+n)+(a_1+2a_2+ \cdots +ta_t)+(2 \cdot 1a_2+3 \cdot 2a_3+ \cdots +t(t-1)a_t)+4\sum_{i=3}^{t}\frac{f^{(i)}(1)}{i!}2^{i-3})$
\end{itemize}
The sum of these constant terms modulo $2$ is
$$Q(s)+Q(s+2k+1+2a_2)+Q(1+s+k+m+n)+Q((1+s+k+m+n)+a_1+2^2a_2+3^2a_3+ \cdots +t^2a_t),$$ where we have used Lemma~\ref{lem:Q} part $(iv)$. Using part $(iii)$ of the same lemma and reusing part $(iv)$ as well, the sum modulo $2$ simplifies to
$$s+k+a_2+Q(1+s+k+m+n)+Q((1+s+k+m+n)+1+2k+(2^2a_2+4^2a_4+ \cdots)+(3^2a_3+5^2a_5+ \cdots))$$$$ \equiv s+k+a_2+Q(1+s+k+m+n)+Q((1+s+k+m+n)+1+2k+(3^2a_3+5^2a_5+ \cdots))$$$$\equiv s+k+a_2+Q(1+s+k+m+n)+Q((1+s+k+m+n)+1+2k+(a_3+a_5+ \cdots))$$$$ \equiv s+k+a_2+Q(1+s+k+m+n)+Q((1+s+k+m+n)+1+2k+2n) \pmod2$$ Where we have used Lemma~\ref{lem:squares} to obtain $3^2a_3+5^2a_5+ \cdots \equiv a_3+a_5+ \cdots $.
Again we apply part $(iii)$ of Lemma~\ref{lem:Q} to have the sum modulo $2$ as
$$s+k+a_2+(1+s+k+m+n)+k+n \equiv 2s+2k+(k+n)+m+(n+a_2+1)$$$$ \equiv ((k+n)+m)+(n+a_2+1) \equiv (k+n)+m \equiv 0 \pmod2$$ where we have used the fact that the two conditions $(ii')$ and $(iii')$ are both not satisfied.

Now we prove sufficiency of the conditions. Again we use the fact that $f$ is permutational to write $a_1=2k+1,$ $a_2+a_4+ \cdots =2m$ and $a_3+a_5+ \cdots =2n$ for some integers $k,m$ and $n$. We can easily infer that $2a_2 \equiv 2n \pmod4$ and $2n+2k \equiv 2m \pmod4$. The last equality is equivalent to equality~\ref{equ:trans2}. We will show that $f$ acts transitively on the first and second levels and then use induction for lower levels. Condition $(i)$ guarantees transitivity on the first level.

We deduced in the proof of necessity that the transitivity on the second level is equivalent to equality~\eqref{equ:trans2}, which is automatically satisfied in our case as shown above.

Before we formulate the induction hypothesis, we recall that at each vertex $v$ of $\{0,1\}^*$ the section $f|_v$ is induced by a permutational polynomial $f|_v(x)=a_0^{\f}+a_1^{\f}x+ \cdots +a_t^{\f}x^t$, so it defines the integers $k_{f|_v}$, $m_{f|_v}$, and $n_{f|_v}$ such that $a_1^{\f}=2k_{f|_v}+1$, $a_2^{\f}+a_4^{\f}+ \cdots =2m_{f|_v}$ and $a_3^{\f}+a_5^{\f}+ \cdots =2n_{f|_v}$. Also if $v$ is a non-root vertex, we have $n_{f|_v} \equiv 0 \pmod2$ and $a_2^{\f} \equiv 0 \pmod2$ by Lemma~\ref{lem:n_f}.

Let us assume that the sections of $f$ at some level $l \geq 1$ of the tree are $\varphi_1, \ldots ,\varphi_r$, where $r=2^l$. Suppose that $a_0^{(\varphi_1)}+ \cdots +a_0^{(\varphi_r)} \equiv 1 \pmod2$ which serves as our induction hypothesis. Let $\psi_1,\psi_2, \ldots ,\psi_{2r-1},\psi_{2r}$ be the sections of $f$ at level $l+1$. Then

$$\sum_{i=1}^{2r}a_0^{(\psi_i)}=\sum_{j=1}^{r}\left(a_0^{(\psi_{2j-1})}+a_0^{(\psi_{2j})}\right)$$
$$=\sum_{j=1}^{r}\left(Q\left(a_0^{(\varphi_j)}\right)+Q\left(a_0^{(\varphi_j)}+a_1^{(\varphi_j)}+ \cdots +a_t^{(\varphi_j)}\right)\right)$$
$$=\sum_{j=1}^{r}\left(Q\left(a_0^{(\varphi_j)}\right)+Q\left(a_0^{(\varphi_j)}+2k_{\varphi_j}+1+2m_{\varphi_j}+2n_{\varphi_j}\right)\right)$$
$$=\sum_{j=1}^{r}\left(a_0^{(\varphi_j)}+k_{\varphi_j}+m_{\varphi_j}+n_{\varphi_j}\right)$$
$$=\sum_{j=1}^{r}a_0^{(\varphi_j)}+\sum_{j=1}^{r}k_{\varphi_j}+\sum_{j=1}^{r}m_{\varphi_j}+\sum_{j=1}^{r}n_{\varphi_j}$$
$$\equiv 1+\sum_{j=1}^{r}k_{\varphi_j}+\sum_{j=1}^{r}m_{\varphi_j} \pmod2 $$
by induction hypothesis, where we have used Lemma~\ref{lem:Q} part $(iii)$, then rearranged the terms, and finally exploited Lemma~\ref{lem:n_f} at the last transition. We claim that $(k_{\varphi_i}+k_{\varphi_{i+1}})+(m_{\varphi_i}+m_{\varphi_{i+1}}) \equiv 0 \pmod2$ for $i=1,3, \ldots ,r-1$.

We will show that for every permutational polynomial $g(x)=b_0+b_1x+b_2x^2+\cdots +b_tx^t$ satisfying the two conditions $n_g \equiv 0 \pmod2$ and $b_2 \equiv 0 \pmod2$, we have $(k_{\ggz}+k_{\ggu})+(m_{\ggz}+m_{\ggu}) \equiv 0 \pmod2$. This will be enough to prove the claim.

First we have $a_1^{\gz}+a_1^{\gu}=2k_{\ggz}+1+2k_{\ggu}+1$, thus from equations~\eqref{equ:sec0} and \eqref{equ:sec1} we have $$k_{\ggz}+k_{\ggu}=\frac{a_1^{\gz}+a_1^{\gu}}{2}-1=\frac{b_1+(b_1+2b_2+ \cdots +tb_t)}{2}-1=$$$$b_1+b_2+(2b_4+3b_6+4b_8+ \cdots )+\frac{3b_3+5b_5+ \cdots }{2}-1=$$$$2k_g+1+b_2+(2b_4+4b_8+ \cdots )+(3b_6+5b_{10}+ \cdots )+\frac{3b_3+5b_5+ \cdots }{2}-1$$
Using Lemma~\ref{lem:odd}, we can write the sum modulo $2$ as $$k_{\ggz}+k_{\ggu} \equiv (b_6+b_{10}+ \cdots )+\frac{3b_3+5b_5+ \cdots }{2} \pmod2$$
Again from equations~\eqref{equ:sec0} and \eqref{equ:sec1}, we can write $m_{\ggz}+m_{\ggu}$ modulo $2$ as
$$m_{\ggz}+m_{\ggu} \equiv b_2+(b_2+3 \cdot 1b_3+2 \cdot 3b_4+5 \cdot 2b_5+3 \cdot 5b_6+ \cdots )$$$$ \equiv 2b_2+(2 \cdot 3b_4+5 \cdot 2b_5+ \cdots )+(3 \cdot 1b_3+3 \cdot 5b_6+ \cdots )$$$$ \equiv b_3+b_6+b_7+b_{10}+b_{11}+b_{14}+b_{15}+ \cdots \pmod2$$
The last equivalence comes from applying Lemma~\ref{lem:odd}. Therefore the sum
$$(k_{\ggz}+k_{\ggu})+(m_{\ggz}+m_{\ggu})$$$$ \equiv (b_3+b_7+b_{11}+b_{15}+ \cdots)+\frac{3b_3+5b_5+ \cdots }{2}$$$$ \equiv \frac{5(b_3+b_5)+9(b_7+b_9)+ \cdots }{2} \pmod2$$
From Lemma~\ref{lem:mod4}, we have $5(b_3+b_5)+9(b_7+b_9)+ \cdots \equiv (b_3+b_5)+(b_7+b_9)+ \cdots \equiv 2n \pmod4$ and thus $\displaystyle{\frac{5(b_3+b_5)+9(b_7+b_9)+ \cdots }{2}} \equiv n \equiv 0 \pmod2$. The proof is now complete.
\end{proof}
\end{theorem}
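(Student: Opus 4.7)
The plan is to use Proposition~\ref{pro:tra}: a permutational polynomial $f$ acts level transitively on $\{0,1\}^*$ if and only if at every level the sum of constant terms of the sections of $f$ is odd, since a section acts nontrivially on the first level precisely when its constant term is odd. The whole proof then tracks, level by level, the parity of this sum using the explicit section formulas \eqref{equ:sec0} and \eqref{equ:sec1}.

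For necessity, I first rewrite the permutational constraints from Theorem~\ref{the:riv} as $a_1=2k+1$, $a_2+a_4+\cdots=2m$, $a_3+a_5+\cdots=2n$, so conditions (ii) and (iii) become the cleaner statements $a_2\equiv n\pmod 2$ and $k+a_2\equiv m\pmod 2$. Level-1 transitivity immediately forces (i). Applying Lemma~\ref{lem:Q}(iii) to $Q(a_0)+Q(a_0+a_1+\cdots+a_t)$, the level-2 constant-term sum reduces to $a_0+k+m+n$, so transitivity on level 2 is equivalent to $(k+n)\equiv m\pmod 2$. This holds when (ii) and (iii) both hold \emph{and} when they both fail, so a level-2 argument alone cannot rule out the latter. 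To finish necessity I expand the four constant terms at level~$2$ via \eqref{equ:sec0} and \eqref{equ:sec1}, reduce mod~$2$ using Lemma~\ref{lem:Q} parts (iii)--(v) and Lemma~\ref{lem:squares} (to replace $3^2a_3+5^2a_5+\cdots$ by $a_3+a_5+\cdots$), and show that under the assumption that (ii) and (iii) both fail the sum vanishes mod~$2$, violating level-3 transitivity.

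For sufficiency I induct on the level. The base cases at levels 1 and 2 follow from the computations above. For the inductive step, let $\varphi_1,\dots,\varphi_r$ be the sections at level $l$ and $\psi_1,\dots,\psi_{2r}$ their children. Using Lemma~\ref{lem:Q}(iii), each sibling pair contributes $a_0^{(\psi_{2j-1})}+a_0^{(\psi_{2j})}\equiv a_0^{(\varphi_j)}+k_{\varphi_j}+m_{\varphi_j}+n_{\varphi_j}\pmod 2$. Summing over $j$, the level-$(l+1)$ constant-term sum equals the level-$l$ sum plus $\sum_j(k_{\varphi_j}+m_{\varphi_j}+n_{\varphi_j})$ mod~$2$. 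By Lemma~\ref{lem:n_f} each $n_{\varphi_j}$ is even (the $\varphi_j$ are non-root sections), so the induction closes provided the sibling contributions $k_{\varphi_{2i-1}}+k_{\varphi_{2i}}+m_{\varphi_{2i-1}}+m_{\varphi_{2i}}$ all vanish modulo~$2$. This reduces the inductive step to the key claim: for any permutational polynomial $g(x)=b_0+b_1x+\cdots+b_tx^t$ satisfying $n_g\equiv 0\pmod 2$ and $b_2\equiv 0\pmod 2$, we have $(k_{\ggz}+k_{\ggu})+(m_{\ggz}+m_{\ggu})\equiv 0\pmod 2$.

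The hard part will be this key claim, which is a careful coefficient-bookkeeping calculation. I would extract $k_{\ggz}+k_{\ggu}$ from the coefficients of $x$ in $g|_0$ and $g|_1$ (read off from \eqref{equ:sec0} and \eqref{equ:sec1}) and $m_{\ggz}+m_{\ggu}$ from the even-indexed coefficients of the sections; each becomes a linear combination of the $b_i$. Using Lemma~\ref{lem:odd} to reduce odd multipliers to~$1$ mod~$2$, the $b_2\equiv 0\pmod 2$ hypothesis to drop the $b_2$ contributions, and Lemma~\ref{lem:mod4} to pair the odd-indexed coefficients into $b_3+b_5,\ b_7+b_9,\ldots$, the total sum should collapse to $n_g\pmod 2$, which vanishes by hypothesis. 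The main risks are sign errors and index-shifting when handling the derivative coefficients $f^{(i)}(1)/i!$ appearing in \eqref{equ:sec1}, so I would split even- and odd-indexed $b_i$ at the outset and treat the two families in parallel.
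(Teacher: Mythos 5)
Your proposal follows the paper's own proof essentially step for step: the same reduction via Proposition~\ref{pro:tra} to parities of constant-term sums, the same reformulation $(ii')$, $(iii')$ with the level-2 criterion $(k+n)\equiv m\pmod 2$ and the level-3 computation for the both-fail case, and the same induction whose crux is exactly the key claim about $(k_{\ggz}+k_{\ggu})+(m_{\ggz}+m_{\ggu})$ for sections $g$ with $n_g$ and $b_2$ even. The remaining bookkeeping you outline for that claim is carried out in the paper precisely as you describe (Lemmas~\ref{lem:odd}, \ref{lem:mod4}, and~\ref{lem:squares} collapsing the sum to $n_g\equiv 0\pmod 2$), so the approach is correct and not materially different.
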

If a mapping $f$ acting on the rooted binary tree $\{0,1\}^*$ is level transitive, then the orbit of every element in the boundary of the tree $\Z_2$ under $f$ is dense in $\Z_2$. Thus the dynamical system $(\Z_2,f)$ is minimal. Also Anashin proved in~\cite{ana:erg} that a polynomial $f \in \Z[x]$ is ergodic with respect to the normalized Haar measure on $\Z_2$ if and only if $f$ is transitive $\mathrm{mod}\ 2^n$ for every positive integer $n$. Combining Theorem~\ref{the:riv} and Theorem~\ref{the:main}, we thus obtain a new and more elementary proof of the result of Larin~\cite{larin:tr}.

\begin{theorem}[\cite{larin:tr}]
\label{the:larin}
Let $f(x)=a_0+a_1x+ \cdots + a_tx^t \in \Z_2[x]$. Then the dynamical system $(\Z_2,f)$ is minimal, or equivalently, $f$ is ergodic with respect to the normalized Haar measure of $\Z_2$ if and only if the following conditions are satisfied:
\begin{itemize}
\item[(i)] $a_0 \equiv 1 \pmod2$
\item[(ii)] $a_1 \equiv 1 \pmod2$
\item[(iii)] $2a_2 \equiv a_3+a_5+ \cdots \pmod4$
\item[(iv)] $a_2+a_1-1 \equiv a_4+a_6+ \cdots \pmod4$
\end{itemize}
\end{theorem}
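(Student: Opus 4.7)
The plan is to deduce Theorem~\ref{the:larin} from Theorems~\ref{the:riv} and~\ref{the:main}, together with standard correspondences between ergodic, topological, and tree-theoretic notions of transitivity. The starting observation is the chain of equivalences already recorded in the excerpt: by Anashin~\cite{ana:erg} and~\cite[Proposition~6.5]{gns00:automata}, $(\Z_2,f)$ is ergodic with respect to the normalized Haar measure if and only if $f$ is transitive on $\Z/(2^n\Z)$ for every $n\geq 1$, which under the identification of the $n$-th level of $\{0,1\}^*$ with $\Z/(2^n\Z)$ from Section~\ref{sec:pre} is precisely level transitivity of the endomorphism of $\{0,1\}^*$ induced by $f$. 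Hence it suffices to show that $f$ induces a level transitive automorphism of $\{0,1\}^*$ if and only if conditions (i)--(iv) hold.

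A level transitive endomorphism is automatically bijective on every level, so any level transitive polynomial is in particular permutational in the sense of Theorem~\ref{the:riv}. Thus the ``only if'' direction is immediate: given level transitivity, Theorem~\ref{the:riv} yields $a_1\equiv 1\pmod 2$, which is condition (ii), and Theorem~\ref{the:main} yields conditions (i), (iii), and (iv) verbatim. For the ``if'' direction I would first show that conditions (i)--(iv) imply the three hypotheses of Theorem~\ref{the:riv}, so that $f$ is permutational and Theorem~\ref{the:main} becomes applicable; then the level transitivity of $f$ follows from Theorem~\ref{the:main}, because its three hypotheses coincide with (i), (iii), and (iv).

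The only verification required in the ``if'' direction is that conditions (iii) and (iv) force Rivest's parity conditions on the even-indexed and odd-indexed coefficient sums. Reducing (iii) modulo $2$ immediately gives $a_3+a_5+\cdots\equiv 0\pmod 2$. Reducing (iv) modulo $2$ and using (ii) gives $a_2\equiv a_4+a_6+\cdots\pmod 2$, equivalently $a_2+a_4+a_6+\cdots\equiv 0\pmod 2$. I do not anticipate any serious obstacle in this argument, since the real content has already been absorbed into Theorems~\ref{the:riv} and~\ref{the:main}; the proof is purely a matter of matching up the conditions and performing these two one-line parity reductions.
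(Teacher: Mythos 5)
Your proposal is correct and follows essentially the same route as the paper, which obtains Theorem~\ref{the:larin} by combining Theorem~\ref{the:riv} with Theorem~\ref{the:main} and invoking the equivalence (via Anashin and the level identification) between ergodicity, transitivity mod $2^n$ for all $n$, and level transitivity on $\{0,1\}^*$. Your explicit verification that conditions (iii) and (iv), reduced mod $2$ together with (ii), recover Rivest's parity conditions is a detail the paper leaves implicit, and it checks out.
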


\bibliographystyle{alpha}

\newcommand{\etalchar}[1]{$^{#1}$}
\def\cprime{$'$}

\end{document}